\theoremstyle{plain}
\newtheorem{theorem}{Theorem}[section]
\newtheorem{lemma}[theorem]{Lemma}
\newtheorem{proposition}[theorem]{Proposition}
\theoremstyle{definition}
\theoremstyle{remark}
\newtheorem{remark}[theorem]{Remark}
\newcommand{\bR}{{\mathbb R}}
\newcommand{\bN}{{\mathbb N}}
\newcommand{\bQ}{{\mathbb Q}}
\newcommand{\bP}{{\mathbb P}}
\def\ve{\varepsilon}
\def\la{\lambda}
\def\La{\Lambda}
\def\t{\tilde}
\def\q{\quad}
\def\qq{\qquad}
\def\g{\gamma}
\def\G{\Gamma}
\def\dl{\delta}
\def\Dl{\Delta}
\def\lt{\left}
\def\rt{\right}
\def\b{\beta}
\def\i{\infty}
\def\Div{\text{div }}
\def\sgn{\text{sgn }}
\def \ls{\lesssim}
\def\p{\partial}
\def\f{\frac}
\def\na{\nabla}
\def\al{\alpha}
\def\O{\Omega}
\def\s{\sqrt}
\numberwithin{equation}{section}
\def\dashint{\operatorname%
{\,\,\text{\bf--}\kern-.98em\DOTSI\intop\ilimits@\!\!}}
\begin{document}
\title[Time analyticity]{Time analyticity for inhomogeneous parabolic equations and the Navier-Stokes equations in the half space}

\author[H. Dong]{Hongjie Dong}
\address[H. Dong]{Division of Applied Mathematics, Brown University, 182 George Street, Providence, RI 02912, USA}

\email{Hongjie\_Dong@brown.edu}


\author[X. Pan]{Xinghong Pan}
\address[X. Pan]{Department of Mathematics, Nanjing University of Aeronautics and Astronautics, Nanjing 211106, China}

\email{xinghong\_87@nuaa.edu.cn}

\thanks{X. Pan is supported by Natural Science Foundation of Jiangsu Province (No. SBK2018041027) and National Natural Science Foundation of China (No. 11801268).}

\subjclass[2010]{time analyticity, parabolic equations, the Navier-Stokes equations}

\keywords{35K10, 35Q30}

\begin{abstract}
We prove the time analyticity for weak solutions of inhomogeneous parabolic equations with measurable coefficients in the half space with either the Dirichlet boundary condition or the conormal boundary condition under the assumption that the solution and the source term have the exponential growth of order $2$ with respect to the space variables. We also obtain the time analyticity for bounded mild solutions of the incompressible Navier-Stokes equations in the half space with the Dirichlet boundary condition. Our work is an extension of the recent work in \cite{DZ:2019arxiv1, Zq:2019arxiv1}, where the authors proved the time analyticity of solutions to the homogeneous heat equation and the Navier-Stokes equations in the whole space.
\end{abstract}
\maketitle

\section{Introduction}

For parabolic equations, it is well known that solutions are analytic in the space variables under reasonable conditions on the coefficients and data. In fact, the space analyticity is a local property, meaning that to show the space analyticity at a given point, we only need to impose conditions in a neighborhood of it.
In contrast, the time analyticity of solutions is a more delicate issue and is false in general. For example, it is not difficult to construct a solution of the heat equation in a finite space-time cylinder, which is not time analytic in a sequence of moments.
The time analyticity is not a local property, so we need to impose certain growth conditions on solutions and data at infinity.
Under additional assumptions, there are many time-analyticity results for the heat equation and other parabolic type equations. See, for example, \cite{Wdv:1962DMJ, Mk:1967PJA, Kg:1980CPAM, Gy:1983CPDE, EMZ:2017JMA}.

In a related development, there have been increasing interest in the study of ancient solutions of parabolic equations, solutions that exist for all negative time. In \cite{SZ:2006BLMS}, the authors proved that sublinear ancient solutions are constants. Later, it was shown in \cite{LZ:2019CPAM} that the space dimension of ancient solutions of polynomial growth is finite and these solutions are polynomials in time. In \cite{CM:2019ARXIV}, a sharp dimension estimate was established. See also the papers \cite{Cm:2006MZ,CM:2019ARXIV1} for applications in the study of mean curvature flow on manifolds, and \cite{Hb:2019ARXIV} for the graph case. In a recent paper \cite{Zq:2019arxiv1}, the author observed that the ancient solution of heat equations with exponential growth with respect to the space variables is analytic in time. This result was extended in \cite{DZ:2019arxiv1} to solutions with exponential growth of order $2$ with respect to the space variables, which is a sharp condition. Moreover, the time analyticity of bounded mild solutions for the incompressible Navier-Stokes equations was also proved in \cite{DZ:2019arxiv1} by using a real-variable argument.

The goal of this paper is to extend the result in \cite{DZ:2019arxiv1} and \cite{Zq:2019arxiv1} to inhomogeneous parabolic equations and the Navier-Stokes equations in the half space. More precisely, our first main result is the time analyticity for weak solutions of inhomogeneous parabolic equations with time-independent measurable coefficients in the half space with either the Dirichlet or the conormal boundary condition, under the assumption that the solution and the source term have the exponential growth of order $2$ with respect to the space variables. For the proof, we first reformulate the problem to the whole space case by using the odd and even extensions. Then we apply an iteration argument used in \cite{DZ:2019arxiv1} to estimate high-order time derivatives of the solution. See \eqref{etds} below.
We note that the method here can be applied to more general linear or nonlinear parabolic equations with inhomogeneous source terms.


For the Navier-Stokes equations, the space analyticity of solutions has been studied extensively in the literature. See, for example, \cite{Kc:1969ARMA,GK:1998JFA,GPS:2007IMRN,DL:2009CMS,BBT:2012ARMA,CKV:2018JDE,Xl:2018ARXIV} and the references therein. There are also many work regarding the time analyticity for the Navier-Stokes equations. In a four-page note \cite{CN:1}, the authors proved that any bounded mild solution of the 3D Navier-Stokes equations is time analytic if the gradient of the solution and the pressure have sublinear growth with respect to the space variables and the solution converges a constant vector as $x\to\infty$. The time analyticity with values in an $L_2$-based Gevrey
class of periodic functions was proved for the Navier-Stokes equations in
\cite{FT:1}.
In \cite{Gy:1983CPDE}, the time analyticity was obtained for any weak solution in $C((0,T),W^{1,p}(\O))$ for $n/2<p<\i$ in a bounded smooth domain with the Dirichlet boundary condition. Later, Giga-Jo-Mahalov-Yoneda \cite{GJMY:2008PD} proved the time analyticity of bounded uniformly continuous mild solutions to the Navier-Stokes equations with the Coriolis force and spatially almost periodic data. In \cite{Ki:1991JDDE}, the author studied the analyticity radius of the space periodic solutions of the 2D Navier-Stokes equations. We point out that the proofs in these papers are based on a complexification argument.
In a recent interesting paper, by using a direct energy-based method, Camliyurt-Kukavica-Vicol \cite{CKV:2018JDE} established the instantaneous space-time analyticity and Gevrey regularity for the Navier-Stokes equations in the half space under the assumption that the initial data belongs to $H^1_0(\bR^n_+)\cap H^4(\bR^n_+)$ and satisfies suitable compatibility conditions.   

Our second main objective of this paper is to prove the time analyticity of the bounded mild solution for the Navier-Stokes equations in the half space with the Dirichlet boundary condition. Compared to the results in \cite{CN:1,Gy:1983CPDE,Ki:1991JDDE}, we consider the equations in the half space and we do not impose any gradient and pressure controls, periodicity or the global integrability assumptions on the solutions. Also, in contrast to \cite{GJMY:2008PD}, we prove the time analyticity by using a real-variable argument and we do not impose any (almost) periodicity condition. 
Finally, different from \cite{CKV:2018JDE} in which solutions are assumed to be in the energy space, we consider bounded and continuous solutions which may not decay at the space infinity.
Let us briefly describe the method of our proof. First we recall the formulation of mild solutions in the half space, which is given by convolutions of certain kernels with the initial data, the source term, and the square of the solution. See \eqref{milds} below. These kernels have certain integrability properties, which ensure the local-in-time solvability of the initial value problem. In order to prove the time analyticity, we estimate higher order time derivatives of the solution. Here the difficulty is that we cannot directly take the time derivatives of the kernels because their derivatives are in general not integrable in space-time. To this end, we follow a technique used in \cite{DZ:2019arxiv1} by using an algebraical manipulation of the kernels. Finally, by induction we bound the $k$-th order time derivative of the bounded mild solution by $M^{k+1}k^k t^{-k}$ for any $t>0$ for some large constant $M$, which implies the time analyticity of the solution.

We will present our results in Section 2 for linear parabolic equations and in Section 3 for the Navier-Stokes equations, respectively. The symbol $ ... \ls ...$ stands for $... \le C ...$ for a positive constant $C$. The notation $C$ with or without indices denotes a positive constant whose value may change from line to line. We use $\p_i$ to denote $\p_{x_i}$ for $i=1,2,\ldots,n$. Throughout the paper, the summation convention over repeated indices are used. Let $(t_0,x_0)$ be a given space-time point in $\bR^{n+1}$. We denote the parabolic cylinder in $\bR^{n+1}$ by
\begin{equation*}
Q_r(t_0,x_0):=\lt\{(t,x):\ |x-x_0|< r,\ t\in(t_0-r^2,t_0)\rt\}
\end{equation*}
and the space ball by
\begin{equation*}
B_r(x_0):=\lt\{x:\ |x-x_0|< r\rt\}.
\end{equation*}
Sometimes, we will ignore the center point $(t_0,x_0)$ to denote $Q_r(t_0,x_0)$ by $Q_r$ and $B_r(x_0)$ by $B_r$ if no confusion is caused. We also write $x=(x',x_n)$ with $x'=(x_1,x_2,\ldots,x_{n-1})\in \bR^{n-1}$.

\section{Inhomogeneous parabolic equations in the half space}
In this section, we consider the time analyticity of the divergence form parabolic equations
\begin{equation}
            \label{ep1}
\p_t u-\p_i(a_{ij}(x)\p_j u)=\p_if_i(t,x)\q \text{in}\ (-2,0]\times\bR^n_+
\end{equation}
with the Dirichlet boundary condition
\begin{equation}
        \label{eDirichlet}
u|_{x_n=0}=0
\end{equation}
or the conormal boundary condition
\begin{equation}
        \label{eNeumann}
(a_{nj}\p_j u+f_n)|_{x_n=0}=0.
\end{equation}
Here $(a_{ij})_{1\leq i,j\leq n}$ are bounded measurable functions and satisfy the uniform ellipticity condition, \textit{i.e.}, there exist two constants $0<\la<\La<+\infty$ such that
\begin{equation}
        \label{euelliptic}
\la |\xi|^2\leq a_{ij}(x)\xi_i\xi_j,\quad|a_{ij}|\leq \La \q \text{for}\ \xi=(\xi_1,\ldots,\xi_n)\in\bR^n.
\end{equation}

We state the main result of this section as the following theorem.
\begin{theorem}\label{th1}
Let $u$ be a weak solution of \eqref{ep1} with boundary condition \eqref{eDirichlet} or \eqref{eNeumann}.
Assume that
\begin{equation}
        \label{ebu}
|u(t,x)|\leq A_1 e^{A_2|x|^2}\q \text{in}\ (-2,0]\times \bR^n_+,
\end{equation}
and the source term $f_i$ satisfies
\begin{equation}
        \label{ebc}
|\p^k_t f_i(t,x)|\leq A_1 C^kk^k e^{A_2|x|^2}\q \text{in}\ (-2,0]\times \bR^n_+, \q k\in \{0\}\cup\bN,\ 1\leq i\leq n.
\end{equation}
Then $u=u(t,x)$ is analytic in time at any point $(t_0,x)$ for $t_0\in[-1,0]$ with the radius $\dl$ depending only on $n$, $A_2$, $\la$, and $\La$. Moreover, we have for any $t\in [t_0- \dl,t_0+\dl]$,
\begin{equation*}
u(t,x)=\sum^{+\infty}_{j=0}d_j(t_0,x)\f{(t-t_0)^j}{j!}
\end{equation*}
with
\begin{equation*}
|d_j(t_0,x)|\leq A_1 A^{j+1}_3j^je^{2A_2|x|^2},
\end{equation*}
where $A_3$ depends only on $n$, $A_2$, $\la$, and $\La$.
\end{theorem}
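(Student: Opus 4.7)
The first step is to eliminate the boundary condition by extending the solution to $\bR^n$. For the Dirichlet case, I extend $u$ oddly in $x_n$, while for the conormal case I extend it evenly. In both cases the coefficients $a_{ij}$ and source components $f_i$ must be extended in a compatible way so that the divergence form $\partial_t u - \partial_i(a_{ij}\partial_j u) = \partial_i f_i$ still holds across $\{x_n=0\}$ in the weak sense. Specifically, for the Dirichlet case I extend $a_{ij}$ evenly when $i,j$ are both $<n$ or both equal $n$, oddly when exactly one of $i,j$ equals $n$; and extend $f_i$ oddly for $i<n$, evenly for $i=n$. The conormal case mirrors this with parities swapped. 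A short check shows the extended coefficients remain bounded measurable and satisfy the same ellipticity constants $\la,\La$, the extended source retains the bound \eqref{ebc} (with $|x|$ replaced by $|(x',x_n)|$, still giving $A_1 C^k k^k e^{A_2|x|^2}$ growth), and the extended $u$ retains \eqref{ebu}. From this point on I work with the equation posed on $(-2,0]\times\bR^n$.

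\textbf{Induction on the number of time derivatives.} Having disposed of the boundary, I imitate the iteration scheme of \cite{DZ:2019arxiv1}, which is the step the authors flag at \eqref{etds}. Formally $v_k := \partial_t^k u$ solves $\partial_t v_k - \partial_i(a_{ij}\partial_j v_k) = \partial_i \partial_t^k f_i$, and the goal is to prove by induction on $k$ that for every $(t_0,x_0)\in [-1,0]\times\bR^n$,
\begin{equation*}
|\partial_t^k u(t_0,x_0)| \le A_1 A_3^{k+1} k^k e^{2A_2 |x_0|^2}
\end{equation*}
with $A_3$ depending only on $n,A_2,\la,\La$. The base case $k=0$ is \eqref{ebu}. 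For the inductive step I combine three ingredients: (i) a standard local $L^\infty$ estimate (of Nash--De Giorgi--Moser type) for weak solutions of divergence-form parabolic equations with bounded measurable coefficients, applied on a family of shrinking parabolic cylinders $Q_{r_j}(t_0,x_0)$; (ii) Caccioppoli-type inequalities that trade one spatial derivative for a factor $1/(r_j - r_{j+1})$; and (iii) the observation that $\p_t^k u$ can be expressed, up to spatial derivatives of $\partial_t^{k-1}f$, in terms of space derivatives of $\partial_t^{k-1} u$ via the equation.

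\textbf{Choice of radii and exponential weight.} The technical heart is to integrate the exponential weight $e^{2A_2|x|^2}$ through the iteration. I localize by choosing a cylinder of size $\sim k^{-1/2}$ around $(t_0,x_0)$: set $r_j = \dl_0 (1 + j/k)^{-1/2}$ (or similar) so that $\sum_j (r_j - r_{j+1})^{-2} \sim k$. Applying the $L^\infty$ and Caccioppoli estimates on these shrinking cylinders contributes a factor of order $k$ per differentiation. The exponential growth \eqref{ebu} in the enlarged ball is controlled by $e^{2A_2|x_0|^2}$ thanks to the prefactor $2$ and the smallness of the cylinder radius (the supplement $e^{A_2 r^2}$ is absorbed into the constant for $r$ small). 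Summing the contributions from the $k$-th time derivative of $f$ (which itself contributes $C^k k^k$ by \eqref{ebc}) and from the inductive bound on $v_{k-1}$, the worst total is $A_3^{k+1} k^k$ as required.

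\textbf{Main obstacle.} The delicate point is bookkeeping the weighted local estimates so that the geometric loss at each level of iteration compounds into $k^k$ rather than $(k!)^2$ or worse, while simultaneously keeping the exponential weight $e^{2A_2|x|^2}$ under control; the choice $r_j \sim k^{-1/2}$ is what balances the $|x|^2$ growth against the size of the cylinder. Once the pointwise bound on $\partial_t^k u$ is established, Taylor expansion around $t_0$ with radius $\dl < 1/(eA_3)$ converges and produces the stated representation $u(t,x)=\sum_{j\ge 0} d_j(t_0,x)(t-t_0)^j/j!$ with $d_j = \partial_t^j u(t_0,\cdot)$ satisfying the claimed bound.
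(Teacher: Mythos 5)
Your overall architecture — odd/even extension to the whole space, then an iteration over time derivatives giving $|\partial_t^k u|\lesssim A_3^{k+1}k^k e^{2A_2|x_0|^2}$, then Taylor — matches the paper's proof. But there is a genuine gap and two misalignments worth flagging.

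\textbf{The missing lemma.} Your ingredient (iii) — ``$\partial_t^k u$ can be expressed, up to spatial derivatives of $\partial_t^{k-1}f$, in terms of space derivatives of $\partial_t^{k-1}u$ via the equation'' — glosses over the central difficulty. With only bounded measurable coefficients you have no $H^2$ regularity, so the identity $\partial_t v=\partial_i(a_{ij}\partial_j v)+\partial_i g_i$ cannot be used directly to bound $\|\partial_t v\|_{L^2}$: the right-hand side is not an $L^2$ function. The paper needs a separate lemma (Lemma~\ref{ltd}) of the form
\begin{equation*}
\|\partial_t u\|_{L^2(Q)}\le C\,\dl\sum_i\|\partial_t f_i\|_{L^{2}(Q^\dl)}
+C\,\dl^{-1}\Bigl(\|\nabla u\|_{L^2(Q^\dl)}+\sum_i\|f_i\|_{L^2(Q^\dl)}\Bigr),
\end{equation*}
which is proved by testing the equation against $\partial_t u\,\psi_k^2$ on a nested family of domains, absorbing the resulting $\|\nabla\partial_t u\|$ term through a Caccioppoli inequality applied to the equation for $\partial_t u$, and summing the geometric iteration. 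Without this lemma, Caccioppoli plus local boundedness alone do not close the loop from $\partial_t^{l}u$ to $\partial_t^{l+1}u$, and this is exactly the step that controls the $k^k$ growth.

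\textbf{The radii.} Your proposed shrinking radii $r_j\sim\dl_0(1+j/k)^{-1/2}$ do not produce the needed scaling; the paper instead uses \emph{expanding} cylinders $\O^1_j$ with spatial radius $j/\sqrt{k}$ and time length $j/k$ for $j=1,\dots,k+1$, so that each step in $j$ incurs a gap of size $\tfrac12 k^{-1/2}$ and hence a factor $\sqrt k$ from Caccioppoli and another $\sqrt k$ from the $\partial_t$-lemma, giving $k$ per step and $k^k$ after $k$ steps. The $L^\infty$ estimate is applied only once at the end, on the smallest cylinder of radius $\sim k^{-1/2}$.

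\textbf{The exponential weight.} Because the cylinders expand to radius $\sim\sqrt{k}$, you are not absorbing $e^{A_2 r^2}$ because $r$ is small; rather, $e^{A_2(|x_0|+\sqrt{k+1})^2}\le e^{2A_2|x_0|^2}\,e^{2A_2(k+1)}$, and the extra geometric factor $e^{2A_2(k+1)}=C^{k+1}$ is absorbed into $A_3^{k+1}$. On the $a_{ij}$ extension: note the paper uses the \emph{same} parity rule for $\t a_{ij}$ in both the Dirichlet and conormal cases (odd exactly when one of $i,j$ equals $n$); only the parities of $u$ and $f_i$ flip between the two cases. Your phrase ``mirrors this with parities swapped'' would give the wrong $\t a_{ij}$ if read literally.
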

\begin{remark}
The result in Theorem \ref{th1} also holds for more general linear parabolic equations
\begin{equation*}
\p_t u-\p_i(a_{ij}(x)\p_ju)+b_i(x)\p_i u+\p_i (\tilde b_i(x) u)+c(x)u=f(t,x)+\p_if_i(t,x)
\end{equation*}
with bounded and measurable coefficients $b_i$, $\tilde b_i$, and $c$, and data $f$ satisfying suitable growth condition. In this paper we do not make this generalization since the essential idea of the proof is the same as that in Theorem \ref{th1}.
\end{remark}
The first step of our proof is to extend our problem to be the one in the whole space by using odd and even extensions. This is possible because we do not impose any regularity assumption on the coefficients and data with respect to $x$.

\subsection{Reformulation in the whole space}
We take different extensions for the Dirichlet and conormal boundary conditions.

\noindent{\bf Case 1:} The Dirichlet boundary condition.

If the boundary condition is given by \eqref{eDirichlet}, we make the following extension. For $(t,x)\in (-2,0]\times\bR^n$, let
\begin{equation*}
\t{u}(t,x)= \sgn(x_n)u(t,x',|x_n|),
\end{equation*}
\begin{equation*}
\t{f}_i(t,x)=\lt\{
\begin{aligned}
&\sgn(x_n)f_i(t,x',|x_n|),\q i\neq n,\\
&f_i(t,x',|x_n|),\qq\qq i=n,
\end{aligned}
\rt.
\end{equation*}
and
\begin{equation*}
\t{a}_{ij}(t,x)=\lt\{
\begin{aligned}
&\sgn(x_n)a_{ij}(t,x',|x_n|),\q i\neq n,j=n\ \text{or}\ i=n,\ j\neq n,\\
&a_{ij}(t,x',|x_n|),\qq\qq \text{otherwise}.
\end{aligned}
\rt.
\end{equation*}

\noindent{\bf Case 2:} The conormal boundary condition.

If the boundary condition is given by \eqref{eNeumann}, we make the following extension. For $(t,x)\in (-2,0]\times \bR^n$, let
\begin{equation*}
\t{u}(t,x)=u(t,x',|x_n|),
\end{equation*}
\begin{equation*}
\t{f}_i(t,x)=\lt\{
\begin{aligned}
&f_i(t,x',|x_n|),\qq\qq i\neq n,\\
&\sgn(x_n)f_i(t,x',|x_n|),\q i=n,
\end{aligned}
\rt.
\end{equation*}
and
\begin{equation*}
\t{a}_{ij}(t,x)=\lt\{
\begin{aligned}
&\sgn(x_n)a_{ij}(t,x',|x_n|),\q i\neq n,j=n\ \text{or}\ i=n,\ j\neq n,\\
&a_{ij}(t,x',|x_n|),\qq\qq \text{otherwise}.
\end{aligned}
\rt.
\end{equation*}

In both cases, it is easily seen that $\t{u}$ is a weak solution of the following equation in the whole space $(-2,0]\times\bR^n$:
\begin{equation}
        \label{ewhole}
\p_t \t{u}-\p_i(\t{a}_{ij}(x)\p_j \t{u})=\p_i\t{f}_i(t,x)
\end{equation}
with $\t{a}_{ij}$ satisfying the assumption \eqref{euelliptic} and $\t{u}$ and $\t{f}_i$ satisfying the assumptions \eqref{ebu} and \eqref{ebc} in Theorem \ref{th1}. Moreover, according to our extensions, $\t{u}\equiv u$ in the half space $(-2,0]\times\bR^n_+$. Later on, we will prove the time analyticity for solutions of \eqref{ewhole}. For simplicity, we will drop the tildes in \eqref{ewhole} if no confusion is caused.

\subsection{Proof of Theorem \ref{th1}}

First we state two useful lemmas. The first one is the Caccioppoli inequality (energy estimates) and the other is the local boundedness estimate for \eqref{ewhole}. Their proofs are standard and thus omitted.
\begin{lemma}[Caccioppoli inequality]
Let $0<r<R<\infty$ and $u$ be a weak solution of \eqref{ewhole}. Then we have
\begin{equation}
        \label{cac}
 \|\na u\|_{L^2(Q_{r})}\leq \f{C}{R-r} \|u\|_{L^2(Q_{R})}+C\sum^n_{i=1}\|f_i\|_{L^{2}(Q_R)},
\end{equation}
where the constant $C$ depends only on $\la$, $\La$, and $n$.
 \end{lemma}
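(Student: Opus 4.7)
The plan is to run the classical energy/cutoff argument for divergence-form parabolic equations. I would fix a smooth cutoff $\eta=\eta(t,x)$ supported in $\overline{Q_R}$, identically $1$ on $Q_r$, with $0\le\eta\le 1$, $|\na\eta|\le C/(R-r)$, $|\p_t\eta|\le C/(R-r)^2$, and $\eta\equiv 0$ on the parabolic bottom $\{t=t_0-R^2\}$. Plugging $\vp=u\eta^2$ into the weak formulation of \eqref{ewhole}, and after a Steklov-average regularization in time to make $\p_t u$ admissible as a pairing (a standard device for weak solutions with only $L^2$ time regularity), one obtains the identity
\begin{equation*}
\tfrac12\int_{B_R}u^2\eta^2\,dx\Big|_{t=\tau}+\int_{t_0-R^2}^{\tau}\!\!\int_{B_R} a_{ij}\p_j u\,\p_i(u\eta^2)\,dx\,dt
=\int_{t_0-R^2}^{\tau}\!\!\int_{B_R}\!\!\big(u^2\eta\,\p_t\eta-f_i\,\p_i(u\eta^2)\big)\,dx\,dt
\end{equation*}
for every $\tau\in(t_0-R^2,t_0]$.

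Next I would expand $\p_i(u\eta^2)=\eta^2\p_i u+2u\eta\,\p_i\eta$. The ellipticity \eqref{euelliptic} gives a lower bound $\la\int\eta^2|\na u|^2$ on the principal term. The remaining contributions split into three cross terms: $2\int a_{ij}\p_j u\cdot u\eta\,\p_i\eta$, $\int f_i\eta^2\p_i u$, and $2\int f_i u\eta\,\p_i\eta$. I would bound each by Young's inequality $ab\le\e a^2+\tfrac{1}{4\e}b^2$ with $\e$ small (depending on $\la,\La,n$), so that the $\eta^2|\na u|^2$ contributions are absorbed into the left-hand side. The term $\int u^2\eta\,\p_t\eta$ and the leftover pieces of the cross terms produce factors of $(R-r)^{-2}\int u^2$ and $\int f_i^2$ on the right.

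After absorption and dropping the nonnegative boundary term at $t=\tau$, one arrives at
\begin{equation*}
\int_{Q_r}|\na u|^2\,dx\,dt\le \f{C}{(R-r)^2}\int_{Q_R} u^2\,dx\,dt+C\sum_{i=1}^n\int_{Q_R}f_i^2\,dx\,dt,
\end{equation*}
and taking square roots yields \eqref{cac} with $C=C(\la,\La,n)$. The only nontrivial point to check is the rigorous justification of using $u\eta^2$ as a test function, which the Steklov averaging handles; everything else is bookkeeping with Young's inequality and the ellipticity bound. Note that the reflected coefficients $\t a_{ij}$ in \eqref{ewhole} still satisfy \eqref{euelliptic} with the same constants $\la,\La$, so no new issues arise from working on the extended whole-space equation.
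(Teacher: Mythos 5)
Your argument is the standard Caccioppoli energy estimate — testing with $u\eta^2$ (rigorously via Steklov averaging), splitting by the product rule, using ellipticity for the principal term and Young's inequality to absorb the cross terms — and this is exactly the "standard" proof the paper invokes and omits. Your concluding observation that the reflected coefficients $\t a_{ij}$ retain the same ellipticity constants $\la,\La$ is also correct and appropriate to note.
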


\begin{lemma}[Local boundedness estimate]
Let $p>n+2$ and $u$ be a weak solution of \eqref{ewhole}. Then we have the following
\begin{equation}
        \label{emean}
\|u\|_{L^\infty(Q_{R/2})}\leq CR^{-1-n/2}\|u\|_{L^2(Q_R)}+CR^{1-(n+2)/p}\sum^n_{i=1}\|f_i\|_{L^{p}(Q_R)}
\end{equation}
for any $0<R<\infty$, where the constant $C$ depends only on $\la$, $\La$, $p$, and $n$.
\end{lemma}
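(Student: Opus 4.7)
The plan is a standard parabolic Moser iteration adapted to the $L^p$ source. First, a rescaling $v(t,x):=u(R^2 t, Rx)$, $g_i(t,x):=R f_i(R^2 t, Rx)$ reduces \eqref{emean} to the case $R=1$: the function $v$ solves the same type of divergence-form parabolic equation on $Q_1$ with coefficients $a_{ij}(Rx)$ (same $\la,\La$) and source $\p_i g_i$, and it suffices to prove
\[
\|v\|_{L^\infty(Q_{1/2})}\le C\|v\|_{L^2(Q_1)}+C\sum_i\|g_i\|_{L^p(Q_1)}.
\]
The $R$-dependent prefactors $R^{-1-n/2}$ and $R^{1-(n+2)/p}$ in \eqref{emean} then appear automatically on undoing the scaling (they match the $L^2$- and $L^p$-norm scalings on a cylinder of volume $\sim R^{n+2}$).

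For the iteration, I would fix nested cylinders $Q_{r_k}$ with $r_k=1/2+2^{-k-1}$, smooth cutoffs $\eta_k$ equal to $1$ on $Q_{r_{k+1}}$ and supported in $Q_{r_k}$ satisfying $|\na\eta_k|^2+|\p_t\eta_k|\ls 4^k$, and test the equation against $\eta_k^2(|v|+\kappa)^{q-2}v$ for $q\ge 2$, with the shift $\kappa:=\sum_i\|g_i\|_{L^p(Q_1)}$ introduced to linearize the source. Ellipticity together with Young's inequality absorbs both the cutoff derivatives and the $\na v$-part of the source contribution, while H\"older with exponents $(p,p/(p-1))$ handles the remaining piece. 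This produces a Caccioppoli-type inequality of the form
\[
\sup_t\int \eta_k^2(|v|+\kappa)^q\,dx+\iint \eta_k^2|\na(|v|+\kappa)^{q/2}|^2\,dx\,dt\le C q^2 4^k\iint_{Q_{r_k}}(|v|+\kappa)^q\,dx\,dt.
\]
Combining this with the parabolic Sobolev embedding $L^\infty_t L^2_x\cap L^2_t H^1_x\hookrightarrow L^{2\chi}$, with $\chi:=1+2/n>1$, one obtains the reverse-H\"older step
\[
\|(|v|+\kappa)\|_{L^{q\chi}(Q_{r_{k+1}})}\le (C q^2 4^k)^{1/q}\|(|v|+\kappa)\|_{L^q(Q_{r_k})},
\]
and iterating with $q_k:=2\chi^k$ keeps the product of iteration constants finite and yields $\|v\|_{L^\infty(Q_{1/2})}\ls\|v\|_{L^2(Q_1)}+\kappa$.

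The only non-routine bookkeeping is the role of $p>n+2$: this is the parabolic scaling-critical exponent for the source term, and it is exactly what guarantees that, after H\"older is applied to $\iint \eta_k^2(|v|+\kappa)^{q-1}|g_i|$, the resulting exponent $(q-1)p/(p-1)$ is compatible with the Sobolev gain $\chi$ so that the source contribution can be dominated by a fixed fraction of $\iint(|v|+\kappa)^q$ (after Young with the shift $\kappa$), uniformly in $k$. With $p\le n+2$ the iteration would not close by this method. Apart from this, the computation is entirely parallel to the classical Moser iteration for homogeneous divergence-form parabolic equations, which is why the lemma is labeled standard.
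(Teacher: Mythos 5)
Your Moser-iteration argument is precisely the standard proof that the paper explicitly omits (``Their proofs are standard and thus omitted''), and it is correct: the rescaling produces exactly the powers $R^{-1-n/2}$ and $R^{1-(n+2)/p}$ in \eqref{emean}, and the shift $\kappa=\sum_i\|g_i\|_{L^p(Q_1)}$ closes the iteration under $p>n+2$. The only (harmless) bookkeeping slip is where that condition enters: after Young's inequality absorbs the $\na v$-part of the source pairing, the leftover term is $\iint\eta_k^2|g|^2(|v|+\kappa)^{q-2}\le\iint\eta_k^2(|g|/\kappa)^2(|v|+\kappa)^{q}$, and H\"older in the exponents $(p/2,p/(p-2))$ requires $p/(p-2)<1+2/n$, i.e.\ $p>n+2$ — this, rather than the exponent $(q-1)p/(p-1)$ you cite, is the binding constraint.
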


Using the local boundedness estimate \eqref{emean} and by setting $R=1/{\s{k}}, k\in\bN,$ and $p=\infty$, we have
\begin{align}
        \label{emean2}
&\sup_{ Q_{\f{1}{2\s{k}}}(t_0,x_0)}|u|\nonumber\\
&\leq C(\s{k})^{\f{n}{2}+1}\|u\|_{L^2(Q_{\f{1}{\s{k}}}(t_0,x_0))}+C(\s{k})^{-1}\sum^n_{i=1}\|f_i\|_{L^{\infty}(Q_{\f{1}{\s{k}}}(t_0,x_0))}.
\end{align}
Note that for any $l\in\bN$, $\p^l_tu$ is a solution of the following equation
\begin{equation}
        \label{eparabolic1}
\p_t \p^l_tu-\p_i(a_{ij}(x) \p_j\p^l_tu)=\p_i\p^l_tf_i(t,x),\q (t,x)\in (-\infty,0]\times\bR^n.
\end{equation}
To be rigorous, here and in the sequel we need to first take the finite-difference quotients and then pass to the limit.
From \eqref{emean2}, we get
\begin{equation}
        \label{emean3}
\begin{aligned}
\q\sup_{ Q_{\f{1}{2\s{k}}}(t_0,x_0)}|\p^l_tu|\leq& C(\s{k})^{\f{n}{2}+1}\|\p^l_tu\|_{L^2(Q_{\f{1}{\s{k}}}(t_0,x_0))}\\
                               &+C(\s{k})^{-1}\sum^n_{i=1}\|\p^l_tf_i\|_{L^{\infty}(Q_{\f{1}{\s{k}}}(t_0,x_0))}.
                               \end{aligned}
\end{equation}
Before proceeding further, we give a useful lemma.
\begin{lemma}\label{ltd}
Let $u$ be a weak solution of \eqref{ewhole} and $\p_t u\in L^2$. For any $0< r<+\infty$ and $S<T$, denote $(S, T)\times B_r(0)$ by $Q$. For $\dl>0$, define $$
Q^\dl:=\bigcup_{z\in Q}Q_\dl(z).
$$
Then we have,
\begin{equation*}
\|\p_t u\|_{L^2(Q)}\leq C\dl\sum^n_{i=1}\|\p_tf_i\|_{L^{2}({Q}^\dl)}
+C\dl^{-1}\Big(\|\na u\|_{L^2({Q}^\dl)}+\sum^n_{i=1}\|f_i\|_{L^2({Q}^\dl)}\Big),
\end{equation*}
where $C$ depends only on $\la, \La$, and $n$.
\end{lemma}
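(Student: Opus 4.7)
The plan is to test the weak formulation of \eqref{ewhole} against $\eta^2\,\p_t u$, where $\eta=\eta(t,x)$ is a smooth space-time cutoff identically $1$ on $Q$, supported in a small enlargement of $Q$ (arranged to sit inside $Q^\dl$, up to possibly shrinking $Q$ slightly in time so that $\eta$ vanishes at its temporal endpoints), and satisfying $|\na \eta|\ls 1/\dl$, $|\p_t\eta|\ls 1/\dl^2$. This produces
\begin{equation*}
\int \eta^2 (\p_t u)^2 \,dx\,dt = -\int (a_{ij}\p_j u + f_i)\,\p_i(\eta^2\, \p_t u)\,dx\,dt,
\end{equation*}
and the expansion $\p_i(\eta^2 \p_t u) = 2\eta\,\p_i\eta\,\p_t u + \eta^2 \p_t \p_i u$ splits the right-hand side into four terms.

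The two ``cutoff-gradient'' terms are estimated directly via Cauchy--Schwarz, contributing at most $C\dl^{-1}\bigl(\|\na u\|_{L^2(Q^\dl)}+\sum_i\|f_i\|_{L^2(Q^\dl)}\bigr)\|\eta\,\p_t u\|_{L^2}$. For the two ``interior'' terms involving $\eta^2\,\p_t\p_i u$, I plan to integrate by parts in $t$, using crucially that $a_{ij}$ is independent of $t$. The $f_i$-piece produces $\int \p_t f_i\,\eta^2\,\p_i u + \int f_i\,\p_t(\eta^2)\,\p_i u$, bounded by $\|\p_t f\|_{L^2(Q^\dl)}\|\na u\|_{L^2(Q^\dl)} + C\dl^{-2}\|f\|_{L^2(Q^\dl)}\|\na u\|_{L^2(Q^\dl)}$.

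Putting everything together and applying Young's inequality to absorb $\tfrac12\|\eta\,\p_t u\|_{L^2}^2$ from the right-hand side into the left, I expect to obtain
\begin{equation*}
\|\eta\,\p_t u\|_{L^2}^2 \ls \dl^{-2}\Bigl(\|\na u\|_{L^2(Q^\dl)}^2+\sum_i\|f_i\|_{L^2(Q^\dl)}^2\Bigr) + \dl^2\sum_i\|\p_tf_i\|_{L^2(Q^\dl)}^2,
\end{equation*}
from which the lemma follows upon taking square roots and using $\|\p_t u\|_{L^2(Q)}\le\|\eta\,\p_t u\|_{L^2}$.

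The main obstacle will be the $a_{ij}$-term, $\int a_{ij}\,\p_j u\,\eta^2\,\p_t\p_i u$: a careless $t$-integration by parts reintroduces $\p_t u$ through $\p_t\p_j u$ and threatens a circular estimate. The resolution is the identity
\begin{equation*}
\p_t(a_{ij}\,\p_i u\,\p_j u) = a_{ij}\,\p_t\p_i u\,\p_j u + a_{ij}\,\p_i u\,\p_t\p_j u,
\end{equation*}
whose right-hand side equals $2\,a_{ij}\,\p_j u\,\p_t\p_i u$ after relabeling $i\leftrightarrow j$ (exploiting that the quadratic form $a_{ij}\p_i u\,\p_j u$ sees only the symmetric part of $a$); integrating against $\eta^2$ in $t$ and transferring $\p_t$ onto $\eta^2$ then yields
\begin{equation*}
\int a_{ij}\,\p_j u\,\eta^2\,\p_t\p_i u = -\tfrac12\int a_{ij}\,\p_i u\,\p_j u\,\p_t(\eta^2),
\end{equation*}
which is bounded by $C\dl^{-2}\|\na u\|_{L^2(Q^\dl)}^2$ with no trace of $\p_t u$. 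A minor secondary technicality is that $Q^\dl$ as defined extends only backward in time, so to make $\eta$ vanish at both temporal endpoints I would first run the argument on $(S,T-\e)\times B_r$ and then pass $\e\downarrow 0$.
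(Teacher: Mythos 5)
Your approach is genuinely different from the paper's: where the paper runs a multiscale iteration (a family of shrinking cylinders, Caccioppoli applied to $\partial_t u$, and a geometric-series absorption), you aim for a single energy estimate, resolving the critical cross term $\int a_{ij}\partial_j u\,\eta^2\partial_t\partial_i u$ by integrating by parts in $t$ rather than introducing $\|\nabla\partial_t u\|$. The cutoff-gradient terms and the $f_i$-term are handled correctly, and the final Young's-inequality bookkeeping matches the claimed bound. But your key identity
\[
\int a_{ij}\,\partial_j u\,\eta^2\,\partial_t\partial_i u
=-\tfrac12\int a_{ij}\,\partial_i u\,\partial_j u\,\partial_t(\eta^2)
\]
requires $a_{ij}$ to be \emph{symmetric}, which the paper does not assume. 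Precisely because, as you note, the quadratic form $a_{ij}\partial_i u\,\partial_j u$ sees only the symmetric part $a^s=(a+a^T)/2$, the product rule gives $\partial_t(a_{ij}\partial_i u\,\partial_j u)=2\,a^s_{ij}\partial_j u\,\partial_t\partial_i u$, not $2\,a_{ij}\partial_j u\,\partial_t\partial_i u$: the antisymmetric residual $2\,a^a_{ij}\partial_j u\,\partial_t\partial_i u$ remains. This residual is not a time derivative of anything (the quadratic form is oblivious to $a^a$), and the only elementary bound on it reintroduces $\|\nabla\partial_t u\|$ — exactly the quantity whose appearance forces the paper's iteration. Since the ellipticity condition \eqref{euelliptic} constrains only the symmetric part, and since the paper's proof (which never integrates by parts in $t$, instead applying Caccioppoli to $\partial_t u$ and iterating) works for non-symmetric coefficients, your argument proves a strictly weaker statement unless you add the hypothesis $a_{ij}=a_{ji}$.

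A secondary, fixable point: $Q^\dl$ enlarges $Q$ only backward in time, so a cutoff that is $\equiv 1$ on $Q$ and supported in $Q^\dl$ cannot also vanish at $t=T$; your proposed remedy of shrinking to $(S,T-\e)\times B_r$ does not change that, since the enlargement $Q_\e^\dl$ still has no forward room above $T-\e$. The correct move is to keep the $t=T$ boundary terms produced by the $t$-integration by parts: the one from the $a_{ij}$-piece has the favorable sign, by ellipticity it absorbs the one from the $f_i$-piece up to $C\int\eta^2|f(T)|^2\,dx$, and this last quantity is bounded by $C\dl^{-2}\|f\|_{L^2(Q^\dl)}^2+C\dl^2\|\partial_t f\|_{L^2(Q^\dl)}^2$ via an elementary trace-in-time estimate.
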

The essential idea of proving Lemma \ref{ltd} comes from \cite{DK:2011JMA}, where the authors considered the case $f_i\equiv 0$. For completeness, we present the proofs here for the case $f_i\not\equiv 0$.

\begin{proof}
Set
\[
 r_k=\sum^k_{l=1}\f{\dl}{2^l},\q s_k=\f{r_k+r_{k+1}}{2},\q k=1,2,\ldots.
\]
Denote
\[
Q^{(0)}:=Q,\q  Q^{(k)}:={Q}^{r_k},\q \text{and}\q \t{Q}^{(k)}:={Q}^{s_k},\q \text{for}\ k=1,2,\ldots.
\]
Let
\[
 A_k=\|\p_t u\|_{L^2({Q}^{(k)})}\q \text{for}\ k=0,1,2,\ldots,
\]
and
\[
 B=\|\na u\|_{L^2({Q}^{\dl})}+\sum^n_{i=1}\|f_i\|_{L^2({Q}^{\dl})}.
\]
Denote by $\psi_k(t,x)$ a smooth function which vanishes near $\partial\t{Q}^{(k)}$ and satisfies
\begin{equation*}
\psi_k(t,x)=1\q \text{in}\  {Q}^{(k)},\q
|\na\psi_k|^2+|\p_t\psi_k|\leq C\lt({2^k}/{\dl}\rt)^2.
\end{equation*}
Testing \eqref{ewhole} with $\p_t u\psi^2_k$, we get
\begin{equation*}
\int_{\t{Q}^{(k)}}(\p_t u)^2\psi^2_k+\int_{\t{Q}^{(k)}}\big(a_{ij}\p_j u+f_i\big)\p_i(\p_t u\psi^2_k)=0.
\end{equation*}
Then by Young's inequality,
\begin{align*}
&\int_{\t{Q}^{(k)}}(\p_t u)^2\psi^2_kdyds\\
=&-\int_{\t{Q}^{(k)}}\left(a_{ij}\p_i \p_t u\p_ju\psi^2_k
+2a_{ij}\psi_k\p_t u\p_j u\p_i\psi_k+f_i\p_i \p_tu\psi^2_k+2f_i \p_t u\psi_k\p_i\psi_k\right)\\
\leq& C\int_{\t{Q}^{(k)}}|\nabla \p_tu||\nabla u|{
\psi^2_k}+\f{1}{4}\int_{\t{Q}^{(k)}}(\psi_k\p_t u)^2+C\int_{\t{Q}^{(k)}}|\nabla u|^2|\nabla\psi_k|^2\\
&+\int_{\t{Q}^{(k)}}\sum^n_{i=1}|f_i||\nabla \p_tu|{\psi^2_k}+\f{1}{4}\int_{\t{Q}^{(k)}}(\psi_k\p_t u)^2+4\int_{\t{Q}^{(k)}}\sum^n_{i=1}|f_i|^2|\nabla\psi_k|^2.
\end{align*}
The above inequality implies that for any $\varepsilon>0$,
\begin{equation}
        \label{2.17}
A_k\leq \ve\|\na\p_t u\|_{L^2(\t{Q}^{(k)})}+C\big(2^k\dl^{-1}+\ve^{-1}\big)B,
\end{equation}
where $C$ depends only on $\La$ and $n$.

Since $\p_t u$ satisfies
\begin{equation*}
\p_t \p_t u-\p_i(a_{ij}(x) \p_j\p_tu)=\p_i\p_tf_i(t,x),
\end{equation*}
by using the Caccioppoli inequality \eqref{cac}, we have
\begin{equation}
        \label{2.18}
\|\na\p_t u\|_{L^2(\t{Q}^{(k)})}\leq C\f{2^k}{\dl} \|\p_tu\|_{L^2({Q}^{(k+1)})}+C\sum^n_{i=1}\|\p_tf_i\|_{L^2({Q}^{(k+1)})}.
\end{equation}
Inserting \eqref{2.18} into \eqref{2.17}, we get
\begin{equation*}
A_k\leq \ve\f{C2^k}{\dl}A_{k+1}+C\ve\sum^n_{i=1}\|\p_tf_i\|_{L^2({Q}^{(k+1)})}
+C\big({2^k}{\dl^{-1}}+\ve^{-1}\big)B.
\end{equation*}
By choosing $\ve=\f{\dl}{3C2^k}$, we obtain that
\begin{equation}
        \label{2.20}
A_k\leq \f{1}{3}A_{k+1}+C\f{\dl}{2^k}\sum^n_{i=1}\|\p_tf_i\|_{L^{2}({Q}^\dl)}+C\f{2^k}{\dl}B.
\end{equation}
Multiplying both sides of \eqref{2.20} by $3^{-k}$ and summing over $k$, we get
\begin{equation*}
\sum^\infty_{k=0}  3^{-k}A_{k}\leq \sum^\infty_{k=1} 3^{-k}A_{k}+ \sum^\infty_{k=0} \sum^n_{i=1}\f{C\dl}{6^k}\|\p_tf_i\|_{L^{2}({Q}^\dl)}
+\f{C}{\dl}\sum^\infty_{k=0}\Big(\f{2}{3}\Big)^kB.
\end{equation*}
Therefore, by absorbing the first summation on the right-hand of the above inequality, we get
\begin{equation*}
A_0\leq C\dl\sum^n_{i=1}\|\p_tf_i\|_{L^{2}({Q}^\dl)}+\f{C}{\dl}B.
\end{equation*}
This proves the lemma.
\end{proof}

Now we continue the proof of Theorem \ref{th1}. For integers $j=1,2,\ldots,k+1,$ consider the domains
\[
\O^1_j=\Big\{(t,x)\big ||x-x_0|<\f{j}{\s{k}},\ t\in (t_0-\f{j}{k},t_0)\Big\},
\]
\[
\O^2_j=\Big\{(t,x)\big ||x-x_0|<\f{j+0.5}{\s{k}},\ t\in(t_0-\f{j+0.5}{k},t_0)\Big\}.
\]
It is easily seen that for $j=1,2,\ldots,k$, $$
\O^1_j\subset\O^2_j\subset\O^1_{j+1}\subset\O^1_{k+1}.
$$

From Lemma \ref{ltd}, we get
\begin{align*}
\|\p_t u\|_{L^2(\O^1_j)}\leq \f{C}{\s{k}}\sum^n_{i=1}\|\p_tf_i\|_{L^{2}(\O^2_j)}+C\s{k}\Big(\|\na u\|_{L^2(\O^2_j)}+\sum^n_{i=1}\|f_i\|_{L^2(\O^2_j)}\Big).
\end{align*}
Therefore,
\begin{equation}
        \label{eiteration}
\|\p_t u\|_{L^2(\O^1_j)}\leq C\s{k}\Big(\|\na u\|_{L^2(\O^2_j)}+\sum^n_{i=1}\|(f_i,k^{-1}\p_tf_i)\|_{L^2(\O^2_j)}\Big).
\end{equation}
Again using \eqref{cac}, we have
\begin{equation}
        \label{caccio1}
\|\na u\|_{L^2(\O^2_{j})}\leq C\s{k}\| u\|_{L^2(\O^1_{j+1})}+C\sum^n_{i=1}\|f_i\|_{L^{2}(\O^1_{j+1})}.
\end{equation}
The above inequalities \eqref{eiteration} and \eqref{caccio1} indicate that
\begin{equation*}
\|\p_t u\|_{L^2(\O^1_j)}\leq C{k}\| u\|_{L^2(\O^1_{j+1})}+C\s{k}\sum^n_{i=1}\|(f_i,k^{-1}\p_t f_i)\|_{L^2(\O^1_{k+1})}.
\end{equation*}
Since $\p^l_t u$ also satisfies \eqref{eparabolic1}, we have
\begin{equation*}
\|\p^{l+1}_tu\|_{L^2(\O^1_j)}\leq C{k}\| \p^l_tu\|_{L^2(\O^1_{j+1})}+C\s{k}\sum^n_{i=1}\|(\p^l_tf_i,k^{-1}\p^{l+1}_t f_i)\|_{L^2(\O^1_{k+1})}.
\end{equation*}
By iterating over $l$ from $k-1$ to $0$, we get
\begin{equation}
        \label{2.27}
\|\p^{k}_tu\|_{L^2(\O^1_1)}\leq C^{k}{k^k}\|u\|_{L^2(\O^1_{k+1})}+\sum^{k}_{l=0}\sum^n_{i=1}
C^{k-l+1}k^{k-l-1/2}\|\p^l_tf_i\|_{L^2(\O^1_{k+1})}.
\end{equation}
Combining \eqref{emean3} and \eqref{2.27}, we have
\begin{align*}
&|\p^k_tu(t_0,x_0)|\\
\leq& C^{k+1} k^{k+\f{n+2}{4}}\|u\|_{L^2(\O^1_{k+1})}+C (\s{k})^{-1}\sum^n_{i=1}\|\p^k_tf_i\|_{L^{\infty}(Q_{\f{1}{\s{k}}})}\\
& +C k^{\f{n+2}{4}}\sum^{k}_{l=0}\sum^n_{i=1}C^{k-l+1}k^{k-l-1/2}\|\p^l_tf_i\|_{L^2(\O^1_{k+1})}.
\end{align*}
Now suppose that $u$ and $f$ satisfy the bounds in \eqref{ebu} and \eqref{ebc}. Substituting these into the above inequality and using H\"older's inequality, we get
\begin{equation}\label{etds}
|(\p^k_tu)(t_0,x_0)|\leq A_1A^{k+1}_3e^{2A_2|x_0|^2}  k^{k}
\end{equation}
for some $A_3$ depending only on $n$, $A_2$, $\la$, and $\La$. Fixing an $R>0$, for any $x\in B_R(0)$, $t_0\in[-1,0]$, and any $j\in \bN$, by Taylor's formula, we have
\begin{equation}\label{eTaylor}
u(t,x)-\sum^{j-1}_{i=0}\p^i_t u(t_0,x)\f{(t-t_0)^i}{i!}=\p^j_t u(\tau,x)\f{(t-t_0)^j}{j!},
\end{equation}
where $\tau=\tau(t,t_0,x,j)$ lies between $t$ and $t_0$. When $|t-t_0|\leq \dl$ for a sufficiently small $\dl>0$, we see that the right-hand side of \eqref{eTaylor} converges to zero when $j$ goes to infinity, which means that $u(t,x)$ is analytic in time at $t=t_0$ with the radius $\dl$. Hence by letting $j\rightarrow\infty$, we have
\begin{equation*}
u(t,x)=\sum^{\infty}_{j=0} d_j(t_0,x)\f{(t-t_0)^j}{j!}\q {\rm for}\ |t-t_0|\leq \dl,\ x\in B_R(0),\ t\in[-1,0]
\end{equation*}
with $d_j(t_0,x)=\p^j_t u(t_0,x)$ satisfying
\begin{equation*}
|d_j(t_0,x)|\leq A_1 A^{k+1}_3e^{2A_2|x|^2}  j^j
\end{equation*}
for some $A_3$ depending only on $n$, $A_2$, $\la$, and $\La$.

\section{The Navier-Stokes equations in the half space}

\subsection{Bounded mild solutions in the half space}

Let us recall some basic facts from \cite{KS:2002IMS, Sva:2003JMS, Sva:2003RMS} about the initial-boundary problem of the linear Stokes equations with $u=(u_1,\ldots,u_n):$ $(0,\infty)\times \bR^n_+\rightarrow\bR^n$:
\begin{equation}
        \label{es}
\left\{
\begin{aligned}
&\p_t u+\na p-\Dl u=f,\quad \Div u=0,\\
&u(0,x)=u_0(x),\\
&u(t,x)|_{x_n=0}=0,
\end{aligned}
\rt.
\end{equation}
where $u_0(x)$ satisfies
\begin{equation*}
\na\cdot u_0(x)=0, \q u_0|_{x_n=0}=0.
\end{equation*}
First let us assume that $f(t,x)$ is a smooth vector field and decays sufficiently fast at infinity. We decompose $f(t,x)$ into the gradient and solenoidal parts by the standard formula
\begin{equation*}
f(t,x)=\na \Phi(t,x)+\bQ f(t,x)=\bP f+\bQ f,
\end{equation*}
where
\begin{equation*}
\Phi(t,x)=-\int_{\bR^n_+}\na_yN(x,y)\cdot f(t,y)\ dy,
\end{equation*}
\begin{equation*}
N(x,y)=E(x-y)+E(x-y^\ast)
\end{equation*}
is the Green function of the Neumann problem for the Laplace operator in the half space, and
\begin{equation*}
E(z)=\lt\{
\begin{aligned}
&-\f{1}{n(n-2)\al(n)}|z|^{2-n},\q n>2,\\
&\f{1}{2\pi}\ln |z|,\qq\qq\qq\q n=2
\end{aligned}
\rt.
\end{equation*}
is the fundamental solution of the Laplace operator. Here $\al(n)$ denotes the volume of the unit ball in $\bR^n$ and $y^\ast=(y',-y_n)$. Thus,
\begin{equation*}
\bQ f(t,x)=f(t,x)+\na_x\int_{\bR^n_+}\na_yN(x,y)\cdot f(t,y)\ dy
\end{equation*}
satisfies the condition
\begin{equation*}
\Div \bQ f=0,\q  (\bQ f)_n|_{x_n=0}=0.
\end{equation*}

The solution of \eqref{es} has the representation formula
\begin{equation*}
u_i(t,x)=\int_{\bR^n_+}G_{ij}(t;x,y)u_{0j}(y)\ dy+\int^t_0\int_{\bR^n_+}G_{ij}(t-\tau;x,y)(\bQ f)_j(\tau,y)\ dyd\tau.
\end{equation*}
See, for instance, Page 1727 of \cite{Sva:2003JMS}. The elements of the matrix $G$ are given by
\begin{align*}
G_{ij}(t;x,y)=&\dl_{ij}(\G(t,x-y)-\G(t,x-y^\ast))\\
              &+4(1-\dl_{jn})\p_{x_j}\int^{x_n}_0\int_{\bR^{n-1}}\p_{x_i} E(x-z)\G(t,z-y^\ast)dz,
\end{align*}
where $\G(t,x)$ is the fundamental solution of the heat equation. Moreover, if we write $G_{ij}$ as
\begin{equation}
        \label{3.10e}
G_{ij}=\dl_{ij}\G(t,x-y)+G^\ast_{ij},
\end{equation}
then $G^\ast_{ij}$ have the following estimate.

\begin{proposition}[Equation (2.38) of \cite{Sva:2003RMS}]\label{pro1}
For any multi-indices $m=(m',m_n)$ and $k=(k',k_n)$, the kernel functions $G^\ast_{ij}(t;x,y)$ satisfy
\begin{align*}
&|\p^s_t\p^k_x\p^m_yG^\ast_{ij}(t;x,y)|\\
&\leq C^s s^{s} C_{k,m}t^{-s-\f{m_n}{2}}(t+x^2_n)^{-\f{k_n}{2}}
(|x-y^\ast|^2+t)^{-\f{n+|k'|+|m'|}{2}}e^{-\f{cy^2_n}{t}},
\end{align*}
where $C$ is a constant independent of $s$ and $C_{k,m}$ depends only on $k$ and $m$.
\end{proposition}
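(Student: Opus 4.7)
The plan is to decompose $G^\ast_{ij}$ into two pieces and estimate each separately. Using \eqref{3.10e} and the explicit formula for $G_{ij}$, write
$$G^\ast_{ij}(t;x,y) = -\dl_{ij}\G(t,x-y^\ast) + H_{ij}(t;x,y),$$
where $H_{ij}(t;x,y):=4(1-\dl_{jn})\p_{x_j}\int^{x_n}_0\int_{\bR^{n-1}}\p_{x_i}E(x-z)\G(t,z-y^\ast)\,dz$. The reflected Gaussian is handled by direct heat-kernel calculus; the Calderon-Zygmund-type piece $H_{ij}$ requires more care because $x_n$ enters both through the upper limit of the $z_n$-integral and through $\p_{x_i}E(x-z)$.

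For the reflected Gaussian $\G(t,x-y^\ast)$, I would first convert each time derivative to a Laplacian via $\p_t\G=\Dl\G$, so that $\p_t^s$ becomes $\Dl^s$, a differential operator of order $2s$. Combined with $\p_x^k\p_y^m$, the Hermite-polynomial representation of derivatives of the Gaussian yields a bound of the form
$$|\p_t^s\p_x^k\p_y^m\G(t,x-y^\ast)|\ls C^s s^s C_{k,m}\,t^{-s-(|k|+|m|)/2}\,(4\pi t)^{-n/2}e^{-c|x-y^\ast|^2/t},$$
in which the $s^s$ factor comes from Stirling applied to the combinatorial coefficients of $\Dl^s$. To match the exact form in the statement, note that since $x_n,y_n\ge 0$, one has $|x-y^\ast|^2=|x'-y'|^2+(x_n+y_n)^2\ge y_n^2+x_n^2+|x'-y'|^2$, so that $e^{-c|x-y^\ast|^2/t}\le e^{-cy_n^2/t}e^{-c(|x-y^\ast|^2-y_n^2)/t}$. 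I would then redistribute powers of $t$ via $t^{-1/2}\le(t+x_n^2)^{-1/2}(1+x_n^2/t)^{1/2}$ and $t^{-1/2}\le(|x-y^\ast|^2+t)^{-1/2}(1+|x-y^\ast|^2/t)^{1/2}$, absorbing the resulting polynomial corrections into the surviving Gaussian factor $e^{-c(|x-y^\ast|^2-y_n^2)/t}$, which bounds any polynomial in $x_n/\s t$ or $(x-y^\ast)/\s t$ by a constant depending only on $n$, $k$, and $m$.

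The main obstacle will be the singular-integral piece $H_{ij}$. Time derivatives and $y$-derivatives pass through the integral onto $\G(t,z-y^\ast)$ and are controlled by the heat-kernel bound above. The delicate part is the $x$-derivatives, especially $\p_{x_n}$, which can act in two ways: (i) by differentiating the upper limit $x_n$ of the $z_n$-integral, producing a boundary contribution with $\G$ evaluated at $(x',x_n)-y^\ast$ and hence an extra factor $e^{-c(x_n+y_n)^2/t}$ — this is the source of the $(t+x_n^2)^{-k_n/2}$ factor, via $e^{-cx_n^2/t}\ls (t/(t+x_n^2))^{k_n/2}$ — or (ii) by differentiating $\p_{x_i}E(x-z)$, producing a more singular but scale-invariant Calderon-Zygmund kernel. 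For case (ii) I would exploit the identity $\p_{x_n}\p_{x_i}E(x-z)=-\p_{z_n}\p_{x_i}E(x-z)$ and integrate by parts in $z_n$, transferring the derivative onto $\G$; the $(n-1)$-dimensional convolution in $z'$ can then be bounded pointwise by standard Calderon-Zygmund estimates acting on a Gaussian, and the remaining one-dimensional $z_n$-integral against the one-dimensional heat kernel produces the factor $(|x-y^\ast|^2+t)^{-(n+|k'|+|m'|)/2}$. An induction on $k_n$, combined with the fact that the $s^s$ bound for time derivatives passes through the integral uniformly, completes the argument.
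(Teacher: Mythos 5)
The paper does not actually prove this proposition: it cites the estimate as Equation (2.38) of Solonnikov \cite{Sva:2003RMS}, and the only original content is the accompanying remark that an inspection of Solonnikov's proof lets one replace the implicit constant $C_{s,k,m}$ there by the explicit $C^s s^s C_{k,m}$. So there is no in-paper proof to compare against; you are attempting a from-scratch derivation, which is a genuinely different (and much more ambitious) route.

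Your treatment of the reflected Gaussian $-\delta_{ij}\Gamma(t,x-y^*)$ is sound. Converting $\partial_t^s$ into $\Delta^s$, using the Hermite-polynomial bound $|\partial^\alpha\Gamma(t,z)|\le C^{|\alpha|}|\alpha|^{|\alpha|/2}t^{-(n+|\alpha|)/2}e^{-c|z|^2/t}$ together with the multinomial expansion of $\Delta^s$ to get the $C^s s^s$ factor, and then redistributing powers of $t$ via $t^{-k_n/2}e^{-cx_n^2/t}\lesssim(t+x_n^2)^{-k_n/2}$ and $t^{-(n+|k'|+|m'|)/2}e^{-c|x-y^*|^2/t}\lesssim(|x-y^*|^2+t)^{-(n+|k'|+|m'|)/2}$ after peeling off $e^{-cy_n^2/t}$ (valid since $|x-y^*|^2\ge x_n^2+y_n^2$), reproduces the stated bound exactly. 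This works because $y$-derivatives of $\Gamma(t,x-y^*)$ convert, up to sign, into $x$-derivatives, so the total derivative count is $|k|+|m|$.

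The piece $H_{ij}$ is where the real difficulty lies, and your sketch does not close it. The boundary kernel $\partial_{x_i}E(x'-z',0)\sim(x_i-z_i)/|x'-z'|^n$ produced when $\partial_{x_n}$ hits the upper limit of the $z_n$-integral is a Riesz kernel on $\mathbb{R}^{n-1}$, not absolutely integrable; the claimed pointwise bound $(|x-y^*|^2+t)^{-(n+|k'|+|m'|)/2}$ requires genuine cancellation, and you must also account for how the $x_n$- and $y_n$-dependence enters that factor (it cannot come from the $z'$-convolution alone and has to be supplied by $e^{-c(x_n+y_n)^2/t}$ on the boundary contributions). Your proposed induction on $k_n$ via repeated integration by parts in $z_n$ generates, at each step, new boundary terms at $z_n=0$ and $z_n=x_n$ plus a bulk term, so the number and type of terms grows with $k_n$, and each must be controlled by an $s$-independent constant. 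Finally, the key point the paper needs from this estimate is precisely the $C^s s^s$ growth in $s$, and while your argument correctly obtains this for the Gaussian piece, you do not verify that the singular-integral manipulations for $H_{ij}$ preserve the $s$-independence of all the other constants. None of this says the idea is wrong, but the sketch is a plan, not a proof, and the shortest defensible path remains the one the paper takes: quote Solonnikov and verify only the explicit $s$-dependence.
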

\begin{remark}
Equation (2.38) of \cite{Sva:2003RMS} contains a less explicit constant $C_{s,k,m}$, which depends on $k,m$ and $s$. By an inspection of the proof in \cite{Sva:2003RMS}, we see that it can be replaced with $C^s s^{s} C_{k,m}$.
\end{remark}

Now in order to define mild solutions of the Navier-Stokes equations, we assume that the source term $f$ is in the divergence form $f=\na\cdot F$,
where $F=(F_{ij})_{1\leq i,j\leq n}$ with $F_{nm}|_{x_n=0}=0$ for $m=1,2,\ldots,n$. The $j$-th component of $\na\cdot F$ is given by
$$
[\na\cdot F]_j=\sum^n_{i=1}\p_iF_{ij}.
$$

Next we give the representation of $\bQ(\na\cdot F)$. As was shown in Proposition 3.1 of \cite{KS:2002IMS},
$$
\bQ(\na\cdot F)=\na\cdot F',
$$
where
\begin{equation}
        \label{eFprime}
\begin{aligned}
F'_{km}=&F_{km}-\dl_{km}F_{nn}+(1-\dl_{nk})\p_{x_m}\Big[\sum^n_{q=1}\int_{\bR^n_+}\p_{y_q} N(x,y)F_{kq}(t,y)\ dy\\
        &+\int_{\bR^n_+}\big(\p_{y_n} N(x,y)F_{nk}(t,y)-\p_{y_k} N(x,y)F_{nn}(t,y)\big)\ dy\Big].
\end{aligned}
\end{equation}
Before proceeding further, we give some notation and equalities. Denote
$$
N^{-}(x,y)=E(x-y)- E(x-y^\ast),
$$
which is the Green function of the Dirichlet problem for the Laplace operator in the half space. Let $\beta$ and $\g$ take values in $\{1,2,\ldots,n-1\}$, and $i$, $j$, $k$, $m$, and $q$ take values in $\{1,2,\ldots,n\}$. It is easily seen that
\begin{equation}
        \label{eN}
\begin{aligned}
\p_{y_\g} N(x,y)&=-\p_{x_\g} N(x,y),\\
\p_{y_\g}N^{-}(x,y)&=-\p_{x_\g} N^{-}(x,y),\\
\p_{y_n} N(x,y)&=-\p_{x_n} N^{-}(x,y).
\end{aligned}
\end{equation}
Now from \eqref{eFprime}, we have
\begin{equation}
        \label{eFprime1}
F'_{nm}=F_{nm}-\dl_{nm}F_{nn},\q m=1,2,\ldots,n.
\end{equation}
Using \eqref{eFprime} and \eqref{eN}, we have
\begin{equation}
        \label{eFprime2}
\begin{aligned}
F'_{\beta\g}=&F_{\beta\g}-\dl_{\beta\g}F_{nn}+\p_{x_\g}\bigg(\sum^n_{q=1}\int_{\bR^n_+}\p_{y_q} N(x,y)F_{\beta q}(t,y)\ dy\\
&+\int_{\bR^n_+}\big(\p_{y_n} N(x,y)F_{n\beta}(t,y)-\p_{y_\beta} N(x,y)F_{nn}(t,y)\big)\ dy\bigg)\\
=&F_{\beta\g}-\dl_{\beta\g}F_{nn}-\p_{x_\g}\sum^{n-1}_{q=1}\int_{\bR^n_+}\p_{x_q} N(x,y)F_{\beta q}(t,y)\ dy\\
&-\p_{x_\g}\int_{\bR^n_+}\p_{x_n} N^-(x,y)F_{\beta n}(t,y)\ dy\\
&-\p_{x_\g}\int_{\bR^n_+}\p_{x_n} N^-(x,y)F_{n\beta}(t,y)+\p_{x_\g}\int_{\bR^n_+}\p_{x_\beta} N(x,y)F_{nn}(t,y)\ dy.
\end{aligned}
\end{equation}
Using the properties of $E(x,y)$, we rewrite $F'_{\beta n}$ as follows.
From \eqref{eFprime} and \eqref{eN}, we have
\begin{footnotesize}
\begin{equation}
        \label{eFprime3}
\begin{aligned}
F'_{\b n}=&F_{\b n}+\p_{x_n}\Big[\sum^n_{q=1}\int_{\bR^n_+}\p_{y_q} N(x,y)F_{\b q}(t,y)\ dy\\
        &+\int_{\bR^n_+}\big(\p_{y_n} N(x,y)F_{n\b}(t,y)-\p_{y_\b} N(x,y)F_{nn}(t,y)\big)\ dy\Big]\\
=&F_{\b n}-\sum^{n-1}_{\g=1}\p_{x_\g}\int_{\bR^n_+}\p_{x_n} N(x,y)F_{\b\g}(t,y)\ dy+\p_{x_\b}\int_{\bR^n_+}\p_{x_n} N(x,y)F_{nn}(t,y)\ dy\\
&-\p^2_{x_n}\int_{\bR^n_+} N^-(x,y)(F_{\b n}+F_{n\b})(t,y)\ dy\\
=&F_{\b n}-\sum^{n-1}_{\g=1}\p_{x_\g}\int_{\bR^n_+}\p_{x_n} N(x,y)F_{\b\g}(t,y)\ dy+\p_{x_\b}\int_{\bR^n_+}\p_{x_n} N(x,y)F_{nn}(t,y)\ dy\\
&+(-\Dl_x+\sum^{n-1}_{\g=1}\p^2_{x_\g})\int_{\bR^n_+} N^-(x,y)(F_{\b n}+F_{n\b})(t,y)\ dy\\
=&F_{\b n}-\sum^{n-1}_{\g=1}\p_{x_\g}\int_{\bR^n_+}\p_{x_n} N(x,y)F_{\b\g}(t,y)\ dy+\p_{x_\b}\int_{\bR^n_+}\p_{x_n} N(x,y)F_{nn}(t,y)\ dy\\
&-(F_{\b n}+F_{n\b})+\sum^{n-1}_{\g=1}\p^2_{x_\g}\int_{\bR^n_+} N^-(x,y)(F_{\b n}+F_{n\b})(t,y)\ dy.
\end{aligned}
\end{equation}
\end{footnotesize}
Here in the second equality, we used \eqref{eN}, and in the fourth equality, we used that $N^-$ is the Green function of the Dirichlet problem for the Laplace operator in the half space.
Thus, inserting \eqref{eFprime1}, \eqref{eFprime2} and \eqref{eFprime3} into $\na\cdot F'$ and by a simply calculation, we get
\begin{equation*}
[\bQ(\na\cdot F)]_j=\na\cdot F'=\sum^n_{k=1}\p_{x_k} F_{kj}-\p_{x_j} F_{nn}-\sum^{n-1}_{\beta=1}\p_{x_\beta}(F_{\beta n}+F_{n\beta})\dl_{nj}+h_j,
\end{equation*}
where $h_j$ has the form
\begin{equation}\label{ehj}
h_j=\sum_{\beta,\g,q,k,l}C_{j\beta\g q k l}\p^2_{x_\beta x_\g}\int_{\bR^n_+}\p_{x_q} N^\pm(x,y)F_{kl}(t,y)\ dy
\end{equation}
and $C_{j\beta\g q k l}$ are constants.
Now we use integration by parts to obtain
\begin{footnotesize}
\begin{equation*}
\begin{aligned}
&\int^t_0\int_{\bR^n_+}G_{ij}(t-\tau;x,y)[\bQ(\na\cdot F)]_j(\tau,y)\ dyd\tau\\
=&\int^t_0\int_{\bR^n_+}G_{ij}(t-\tau;x,y)\Big[\p_{ y_k} F_{kj}-\p_{ y_j} F_{nn}-\p_{y_\beta}(F_{\beta n}+F_{n\beta})\dl_{nj}+h_j\Big](\tau,y)\ dyd\tau\\
=&-\int^t_0\int_{\bR^n_+}\p_{y_k} G_{ij}(t-\tau;x,y)F_{kj}(\tau,y)\ dyd\tau+\int^t_0\int_{\bR^n_+}\p_{y_j} G_{ij}(t-\tau;x,y)F_{nn}(\tau,y)\ dyd\tau\\
&+\int^t_0\int_{\bR^n_+}\p_{y_\beta} G_{in}(t-\tau;x,y)(F_{\beta n}+F_{n\beta})(\tau,y)\ dyd\tau\\
&+\underbrace{\int^t_0\int_{\bR^n_+}G_{ij}(t-\tau;x,y)h_j(\tau,y)\ dyd\tau}_{I_i}.
\end{aligned}
\end{equation*}
\end{footnotesize}
By \eqref{ehj}, we further calculate $I_i$ as
\begin{footnotesize}
\begin{equation*}
\begin{aligned}
I_i=&\sum_{j,\beta,\g,q,k,l}C_{j\beta\g q k l}\int^t_0\int_{\bR^n_+}G_{ij}(t-\tau;x,y)\p^2_{y_\beta y_\g}\int_{\bR^n_+}\p_{y_q} N^\pm(y,z)F_{kl}(\tau,z)\ dzdyd\tau\\
=&\sum_{j,\beta,\g,q,k,l}C_{j\beta\g q k l}\int^t_0\int_{\bR^n_+}G_{ij}(t-\tau;x,z)\p^2_{z_\beta z_\g}\int_{\bR^n_+}\p_{z_q}N^\pm(z,y)F_{kl}(\tau,y)\ dydzd\tau\\
=&\sum_{j,\beta,\g,q,k,l}C_{j\beta\g q k l}\int^t_0\int_{\bR^n_+}G_{ij}(t-\tau;x,z)\int_{\bR^n_+}\lt[\p^2_{y_\beta y_\g}\p_{z_q} N^\pm(z,y)\rt]F_{kl}(\tau,y)\ dydzd\tau\\
=&\sum_{j,\beta,\g,q,k,l}C_{j\beta\g q k l}\int^t_0\int_{\bR^n_+}\lt[\p^2_{y_\beta y_\g}\int_{\bR^n_+}G_{ij}(t-\tau;x,z)\p_{z_q} N^\pm(z,y)dz\rt]F_{kl}(\tau,y)\ dyd\tau.
\end{aligned}
\end{equation*}
\end{footnotesize}
Set
\begin{equation*}
K_{ijq}(t;x,y)=\int_{\bR^n_+}G_{ij}(t;x,z)\p_{z_q} N^\pm(z,y)\ dz.
\end{equation*}
Then we have
\begin{equation*}
I_i=\sum_{j,\beta,\g,q,k,l}C_{j\beta\g q k l}\int^t_0\int_{\bR^n_+}\p^2_{y_\beta y_\g}K_{ijq}(t-\tau;x,y)F_{kl}(\tau,y)\ dyd\tau.
\end{equation*}
\begin{proposition}[Proposition 3.1 of \cite{Sva:2003JMS}]\label{pro2}
The kernels $K_{ijq}$ satisfy
\begin{equation*}
|\p^s_t\p^{k'}_{y'}K_{ijq}(t;x,y)|\leq C^ss^{s}C_{k'}t^{-s}(|x-y|^2+t)^{-\f{n-1+|k'|}{2}}.
\end{equation*}
\end{proposition}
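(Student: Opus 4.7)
My plan is to follow the strategy of \cite{Sva:2003JMS} while additionally accounting for how the constants depend on the number of time derivatives. The proof has three main components: moving tangential $y'$-derivatives onto $G_{ij}$, pointwise bounds on $\p^s_t\p^{k'}_{z'}G_{ij}$, and a space convolution estimate.

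First, I would use the identity $\p_{y_\g}N^{\pm}(z,y)=\mp \p_{z_\g}N^{\pm}(z,y)$ from \eqref{eN} (for tangential indices $\g<n$) combined with tangential integration by parts (no boundary terms since $z'$ is parallel to $\p\bR^n_+$) to write
\begin{equation*}
\p^{k'}_{y'}\p^s_t K_{ijq}(t;x,y)=(\pm 1)^{|k'|}\int_{\bR^n_+}\p^s_t\p^{k'}_{z'}G_{ij}(t;x,z)\,\p_{z_q}N^{\pm}(z,y)\,dz.
\end{equation*}
This transfers all $y'$-derivatives onto $G_{ij}$, which is smooth and enjoys the sharp bounds required.

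Next, using the decomposition $G_{ij}=\dl_{ij}\G(t,x-z)+G^\ast_{ij}$ from \eqref{3.10e}, Proposition \ref{pro1} with $m=0$ and $k_n=0$ gives
\begin{equation*}
|\p^s_t\p^{k'}_{z'}G^\ast_{ij}(t;x,z)|\leq C^s s^s C_{k'}\,t^{-s}(|x-z^\ast|^2+t)^{-(n+|k'|)/2}e^{-cz_n^2/t},
\end{equation*}
while a direct computation yields an analogous Gaussian bound for $\p^s_t\p^{k'}_{z'}\G(t,x-z)$. The factor $C^s s^s$ arises because $\p^s_t\G$ equals $t^{-s}$ times a Laguerre-type polynomial in $|x-z|^2/t$ whose coefficients grow no faster than $C^s s^s$; this is the same input used in \cite{DZ:2019arxiv1}, and the analogous property for $G^\ast_{ij}$ is the improvement in Proposition \ref{pro1} noted in the preceding remark.

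It then remains to combine these bounds with $|\p_{z_q}N^{\pm}(z,y)|\ls |z-y|^{1-n}+|z-y^\ast|^{1-n}$ and to verify that the resulting space convolution is controlled by $(|x-y|^2+t)^{-(n-1+|k'|)/2}$. At scales $|x-y|\ls \s t$ this follows by direct integration; at scales $|x-y|\gg \s t$ one has to invoke the Gaussian decay in the far field together with the Calder\'on--Zygmund structure of the higher tangential derivatives of $N^{\pm}$ in the near field (equivalently, transfer a few of the $\p^{k'}_{z'}$ back onto $N^{\pm}$ via integration by parts), as carried out in \cite{Sva:2003JMS, Sva:2003RMS}. The main obstacle is retaining the explicit $C^s s^s$ dependence on the number of time derivatives rather than the abstract $C_{s,k,m}$ of \cite{Sva:2003RMS}; the key observation is that time derivatives touch only $G_{ij}$, so the prefactor $C^s s^s t^{-s}$ that emerges from Proposition \ref{pro1} and the explicit formula for $\p^s_t\G$ factors out of the $s$-independent $z$-integration cleanly.
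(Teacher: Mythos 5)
The paper does not prove this proposition: it is cited verbatim as Proposition 3.1 of \cite{Sva:2003JMS}, with the only original content being the (implicit) upgrade of the constant to the explicit form $C^s s^s C_{k'}$, analogous to the remark following Proposition \ref{pro1}. Your proposal, by contrast, reconstructs an actual proof, so the two are not really comparable; what I can do is assess whether your reconstruction is sound and consistent with the way the paper uses the result.

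Your overall strategy is the right one and matches what one would extract from Solonnikov. The central structural observation — that $\p^s_t$ hits only $G_{ij}$, so the $C^s s^s t^{-s}$ factor from Proposition \ref{pro1} (and from the heat kernel via the semigroup property, which is the argument the paper actually uses in the proof of Proposition \ref{p3.4} rather than your Laguerre-polynomial phrasing) factors out of an $s$-independent $z$-integral — is exactly the point needed to justify the explicit constant and is correctly identified. The integration-by-parts step moving $\p^{k'}_{y'}$ onto $G_{ij}$ is also correct in substance; note, though, that for tangential indices both $\p_{y_\g}N=-\p_{x_\g}N$ and $\p_{y_\g}N^-=-\p_{x_\g}N^-$ by \eqref{eN}, so your $(\pm1)^{|k'|}$ is always $+1$ after the two sign flips cancel.

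The one place where the outline is genuinely fragile is the convolution estimate. With all $|k'|$ derivatives on $G_{ij}$, the integrand near $z\approx x$ (when $|x-y|\gg\sqrt t$) only gives $|x-y|^{1-n}$ rather than the required $|x-y|^{-(n-1+|k'|)}$; your remark about transferring some tangential derivatives back onto $N^\pm$ in the near field versus the far field is precisely the fix, and it is necessary, not optional. If you wish to turn this sketch into a proof, that region-by-region rebalancing of derivatives between $G_{ij}$ and $N^\pm$, with the Gaussian in $z_n$ from Proposition \ref{pro1} handling the tangential tail, is the step that needs to be written out carefully. None of this affects the $s$-dependence, so your main claim stands; but as written the convolution step still defers the hard part to \cite{Sva:2003JMS,Sva:2003RMS}, which is functionally what the paper itself does by citing.
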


Now if the source term on the right-hand side of \eqref{es} is $f=\na\cdot F$, the solution is given by
\begin{footnotesize}
\begin{equation}
        \label{elm}
\begin{aligned}
u_i(t,x)=&\int_{\bR^n_+}G_{ij}(t;x,y)u_{0j}(y)\ dy+\int^t_0\int_{\bR^n_+}
G_{ij}(t-\tau;x,y)[\bQ (\na\cdot F)]_j(\tau,y)\ dyd\tau\\
=&\int_{\bR^n_+}G_{ij}(t;x,y)u_{0j}(y)\ dy-\sum^n_{k,j=1}\int^t_0\int_{\bR^n_+}\p_{y_k} G_{ij}(t-\tau;x,y)F_{kj}(\tau,y)\ dyd\tau\\
&+\sum^n_{j=1}\int^t_0\int_{\bR^n_+}\p_{y_j} G_{ij}(t-\tau;x,y)F_{nn}(\tau,y)\ dyd\tau\\
&+\sum^{n-1}_{\beta=1}\int^t_0\int_{\bR^n_+}\p_{y_\beta} G_{in}(t-\tau;x,y)(F_{\beta n}+F_{n\beta})(\tau,y)\ dyd\tau\\
&+\sum_{j,\beta,\g,q,k,l}C_{j\beta\g q k l}\int^t_0\int_{\bR^n_+}\p^2_{y_\beta y_\g}K_{ijq}(t-\tau;x,y)F_{kl}(\tau,y)\ dyd\tau.
\end{aligned}
\end{equation}
\end{footnotesize}
For the incompressible Navier-Stokes equations in the half space
\begin{equation}
        \label{ensh}
\left\{
\begin{aligned}
&\p_t u+u\cdot\na u-\Dl u+\na p=\na\cdot F,\\
&\Div u=0,\\
&u(0,x)=u_0(x),\\
&u(t,x)|_{x_n=0}=0,
\end{aligned}
\rt.
\end{equation}
where $F=(F_{ij})_{1\leq i,j\leq n}$ with $F_{nm}|_{x_n=0}=0$ for $m=1,2,\ldots,n$,
we can rewrite the first equation as
\begin{equation*}
\p_t u-\Dl u+\na p=-\na\cdot(u\otimes u)+\na\cdot F=\na\cdot(F-u\otimes u).
\end{equation*}
Substituting $F_{kl}$ with $F_{kl}-u_ku_l$ in \eqref{elm}, we finally get the formulation of mild solutions to \eqref{ensh}:
\begin{equation}
        \label{milds}
\begin{aligned}
&u_i(t,x)\\
=&\int_{\bR^n_+}G_{ij}(t;x,y)u_{0j}(y)\ dy\\
&+\sum^n_{k,j=1}\int^t_0\int_{\bR^n_+}{\p_{y_k}}
G_{ij}(t-\tau;x,y)(u_ku_j-F_{kj})(\tau,y)\ dyd\tau\\
&-\sum^n_{j=1}\int^t_0\int_{\bR^n_+}{\p_{y_j}}
G_{ij}(t-\tau;x,y)(u^2_n-F_{nn})(\tau,y)\ dyd\tau\\
&-\sum^{n-1}_{\beta=1}\int^t_0\int_{\bR^n_+}\p_{y_\beta} G_{in}(t-\tau;x,y)(2u_\beta u_n-F_{\beta n}-F_{n\beta})(\tau,y)\ dyd\tau\\
&-\sum_{j,\beta,\g,q,k,l}C_{i\beta\g q k l}\int^t_0\int_{\bR^n_+}\p^2_{y_\beta y_\g}K_{ijq}(t-\tau;x,y)(u_ku_l-F_{kl})(\tau,y)\ dyd\tau.
\end{aligned}
\end{equation}
For the equation \eqref{milds}, we have the following local solvability result.
\begin{proposition}
For any bounded $u_0\in C(\bR^n_+)$ satisfying $\na\cdot u_0=0$, $u_0|_{x_n=0}=0$ and bounded $F(t,x)\in C((0,\infty)\times {\overline{\bR^n_+}})$ with $F_{nm}|_{x_n=0}=0$ for $m=1,2,\ldots,n$, there exits a $T>0$ such that \eqref{milds} has a unique solution $u\in C((0,T)\times \bR^n_+)$.
\end{proposition}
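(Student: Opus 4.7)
The plan is to construct a mild solution by the Banach fixed point theorem applied to the map $\Phi$ defined by the right-hand side of \eqref{milds}, acting on the complete metric space $X_T := C_b((0,T)\times\bR^n_+;\bR^n)$ equipped with the uniform norm. A fixed point of $\Phi$ is precisely a solution to \eqref{milds}. I will show that for $T$ small enough (depending on $\|u_0\|_\infty$ and $\|F\|_\infty$), $\Phi$ maps a closed ball $\overline{B_M}\subset X_T$ into itself and is a contraction on it.

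Each of the five integrals in \eqref{milds} is estimated using the kernel bounds from Propositions \ref{pro1} and \ref{pro2}. Writing $G_{ij}=\delta_{ij}\Gamma+G^{*}_{ij}$ as in \eqref{3.10e}, the heat kernel part of the first (linear) term contributes $\|u_0\|_\infty$, while Proposition \ref{pro1} at $s=|k|=|m|=0$ shows that $G^{*}_{ij}$ is uniformly integrable in $y\in\bR^n_+$, yielding the bound $C\|u_0\|_\infty$ in total. For each of the next three terms, one $y$-derivative lands on $G_{ij}$ or $G_{in}$; the heat kernel contribution satisfies $\int|\nabla_y\Gamma(t-\tau,x-y)|\,dy\leq C(t-\tau)^{-1/2}$, and the $G^{*}$ contribution (Proposition \ref{pro1} with $|m|=1$) integrates to the same $C(t-\tau)^{-1/2}$ after integrating the Gaussian factor $e^{-cy_n^2/(t-\tau)}$ against the kernel in $y_n$. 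For the last term, Proposition \ref{pro2} with $|k'|=2$ yields the critical bound
$$
|\partial^2_{y_\beta y_\gamma} K_{ijq}(t-\tau;x,y)|\leq C\bigl(|x-y|^2+(t-\tau)\bigr)^{-(n+1)/2},
$$
whose $y$-integral is again $\leq C(t-\tau)^{-1/2}$. Since $\int_0^t(t-\tau)^{-1/2}\,d\tau\leq 2T^{1/2}$, the four nonlinear/source integrals combined are bounded by $CT^{1/2}(\|u\|_{X_T}^2+\|F\|_\infty)$, giving
$$
\|\Phi(u)\|_{X_T}\leq C\|u_0\|_\infty+CT^{1/2}\bigl(\|u\|_{X_T}^2+\|F\|_\infty\bigr)
$$
and the parallel bilinear estimate $\|\Phi(u)-\Phi(v)\|_{X_T}\leq CT^{1/2}(\|u\|_{X_T}+\|v\|_{X_T})\|u-v\|_{X_T}$. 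Choosing $M:=2C\|u_0\|_\infty+1$ and then $T$ so small that $CT^{1/2}(M^2+\|F\|_\infty)\leq M/2$ and $2CT^{1/2}M\leq 1/2$ secures both self-mapping on $\overline{B_M}$ and contraction; the unique fixed point is the desired solution. Continuity on $(0,T)\times\bR^n_+$ follows from the continuity of each kernel for $t-\tau>0$ together with dominated convergence.

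The main obstacle is the final integral involving two $y$-derivatives of $K_{ijq}$. Naively, two derivatives of a Green-function kernel would yield a singularity of order $(|x-y|^2+t)^{-(n+2)/2}$, whose $y$-integral is only $O(1)$, destroying the $T^{1/2}$ smallness needed for contraction. The critical structural observation is that $K_{ijq}$ is defined by convolving $G_{ij}$ with $\partial_{z_q} N^{\pm}$, i.e.\ with a Green-function gradient, which provides one degree of smoothing. Proposition \ref{pro2} captures exactly this gain, establishing the scaling $(|x-y|^2+t)^{-(n-1+|k'|)/2}$, so that two tangential derivatives of $K$ behave like a single derivative of the heat kernel; this is what closes the fixed-point argument.
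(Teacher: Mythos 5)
Your proposal is correct and follows essentially the same route as the paper: both set up the right-hand side of \eqref{milds} (abbreviated in the paper as \eqref{milds3} with the combined kernel $\t K=\na_y G+\na^2_{y'}K$) as a map on $C_b((0,T)\times\bR^n_+)$, use the $L^1_y$ kernel bounds $\|G(t;\cdot)\|_{L^1_y}\les 1$ and $\|\t K(t;\cdot)\|_{L^1_y}\les t^{-1/2}$ (the content of Proposition \ref{p3.4}, which you re-derive from Propositions \ref{pro1} and \ref{pro2} in exactly the way the paper does), integrate $\int_0^t(t-\tau)^{-1/2}\,d\tau\les T^{1/2}$ to gain smallness, and close with a contraction-mapping argument on a ball whose radius scales with $\|u_0\|_\infty$ and $\|F\|_\infty$ for $T$ small. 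The only cosmetic difference is that the paper writes out the Picard iterates $u^{m+1}$ explicitly, whereas you invoke the Banach fixed point theorem directly, and your radius choice $M=2C\|u_0\|_\infty+1$ differs superficially from the paper's $C_\ast=2C_0$ multiplying $\|u_0\|_\infty+\|F\|_\infty$; both choices work.
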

\begin{proof}
For simplification, we ignore the indices, signs, and constants in the formula \eqref{milds} if no confusion is caused and write it as
\begin{equation}
        \label{milds3}
\begin{aligned}
u(t,x)=&\int_{\bR^n_+}G(t;x,y)u_{0}(y)\ dy+\int^t_0\int_{\bR^n_+}{\na_y G}(t-\tau;x,y)(u^2+F)(\tau,y)\  dyd\tau\\
&+\int^t_0\int_{\bR^n_+}\na^2_{y'}K(t-\tau;x,y)(u^2+F)(\tau,y)\ dyd\tau\\
=&\int_{\bR^n_+}G(t;x,y)u_{0}(y)\ dy+\int^t_0\int_{\bR^n_+}\t{K}(t-\tau;x,y)(u^2+F)(\tau,y)\ dyd\tau,
\end{aligned}
\end{equation}
where $\t{K}:={\na_y G}+\na^2_{y'}K$.
In order to solve the equation \eqref{milds3}, we define the function space 
\begin{align*}
&C_b((0,T)\times \bR^n_+)\\
&:=\lt\{u\in C((0,T)\times \bR^n_+):\ |u|\leq C_\ast (\|u_0\|_{L^\i(\bR^n_+)}+\|F\|_{L^\i((0,T)\times \bR^n_+)}) \rt\},
\end{align*}
where $C_\ast$ will be determined later.

Define
$$
u^0(t,x):= \int_{\bR^n_+}G(t;x,y)u_{0}(y)\ dy.
$$
From Proposition \ref{p3.4} below for the estimate of $G$, we have
\begin{equation*}
\|u^0\|_{L^\i((0,T)\times\bR^n_+)}\leq \|u_0\|_{L^\i(\bR^n_+)}\|G(t;x,y)\|_{L^1_y}\leq C_0 \|u_0\|_{L^\i(\bR^n_+)}.
\end{equation*}
We then solve the equation \ref{milds3} by the method of successive approximation:
\begin{equation}\label{milds4}
\begin{aligned}
u^{m+1}(t,x)=&u^0(t,x)+\int^t_0\int_{\bR^n_+}\t{K}(t-\tau;x,y)((u^{m})^2+F)(\tau,y)\ dyd\tau,
\end{aligned}
\end{equation}
for $m=0,1,\ldots$, in $C_b((0,T)\times \bR^n_+)$ by choosing $C_\ast=2C_0$ and a suitably small $T$.

Indeed, if
$$
|u^m(t,x)|\leq 2C_0 (\|u_0\|_{L^\i(\bR^n_+)}+\|F\|_{L^\i((0,T)\times \bR^n_+)}),
$$
then from \eqref{milds4} and Proposition \ref{p3.4} for the estimates of $\na_y G$ and $\na^2_{y'} K$, we have for any $(t,x)\in (0,T)\times \bR^n_+$,
\begin{equation*}
\begin{aligned}
&|u^{m+1}(t,x)|\\
\leq &\|u^0\|_{L^\i((0,T)\times\bR^n_+)}+ \|((u^{m})^2+|F|)\|_{L^\i((0,T)\times\bR^n_+)}\int^t_0\|\t{K}(t-\tau;x,y)\|_{L^1_y}d\tau\\
 \leq & C_0 \|u_0\|_{L^\i(\bR^n_+)}+\lt(8C^2_0 (\|u_0\|^2_{L^\i(\bR^n_+)}+ \|F\|^2_{L^\i((0,T)\times\bR^n_+)} )+ \|F\|_{L^\i((0,T)\times\bR^n_+)}\rt)\\
 &\quad \cdot\int^t_0\f{C}{\s{t-\tau}}d\tau\\
 \leq & C_0 \|u_0\|_{L^\i(\bR^n_+)}+C\s{t}\lt(8C^2_0 (\|u_0\|^2_{L^\i(\bR^n_+)}+ \|F\|^2_{L^\i((0,T)\times\bR^n_+)} )+ \|F\|_{L^\i((0,T)\times\bR^n_+)}\rt)\\
 \leq & 2C_0 (\|u_0\|_{L^\i(\bR^n_+)}+\|F\|_{L^\i((0,T)\times\bR^n_+)})
\end{aligned}
\end{equation*}
provided that $T$ is sufficiently small.
This shows that $u^{m+1}\in C_b((0,T)\times \bR^n_+)$. Moreover, from \eqref{milds4}, it is easily seen that
\begin{equation*}
\begin{aligned}
&|u^{m+1}-u^m|\\
 \leq &\lt |\int^t_0\int_{\bR^n_+}\t{K}(t-\tau;x,y)((u^{m})^2-(u^{m-1})^2)(\tau,y)\ dyd\tau\rt|\\
 \leq &  \|(u^{m})^2-(u^{m-1})^2\|_{L^\i((0,T)\times\bR^n_+)}\lt |\int^t_0\int_{\bR^n_+}\t{K}(t-\tau;x,y)\ dyd\tau\rt|\\
 \leq & \|u^{m}-u^{m-1}\|_{L^\i((0,T)\times\bR^n_+)} \|u^{m}+u^{m-1}\|_{L^\i((0,T)\times\bR^n_+)}\lt |\int^t_0\|\t{K}(t-\tau;x,y)\|_{L^1_y}d\tau\rt|\\
 \leq & 4C_0 (\|u_0\|_{L^\i(\bR^n_+)}+\|F\|_{L^\i((0,T)\times\bR^n_+)}) \|u^{m}-u^{m-1}\|_{L^\i((0,T)\times \bR^n_+)}C\s{t}\\
 \leq & \f{1}{2} \|u^{m}-u^{m-1}\|_{L^\i((0,T)\times \bR^n_+)}
\end{aligned}
\end{equation*}
by choosing a sufficiently small $T$ such that
\begin{equation*}
4CC_0 (\|u_0\|_{L^\i(\bR^n_+)}+\|F\|_{L^\i((0,T)\times\bR^n_+)})\s{T}\leq 1/2.
\end{equation*}
Finally the Banach fixed point theorem indicates that the sequence $u^m$ has a unique limit which is a solution of \eqref{milds3}.
\end{proof}

%

\subsection{Time analyticity}
The main result of this section is the following theorem.
\begin{theorem}\label{thns}
Let $C_0>0$ be a constant. Assume that $u$ is a bounded mild solution of the Navier-Stokes equations \eqref{ensh} on $[0,1]\times\bR^n_+$ with $|u|\leq C_0$ and $F(t,x)$ satisfies
\begin{equation}
        \label{fassum}
\|t^k\p^k_t F\|_{L^\infty((0,\infty)\times \bR^n_+)}\leq  C^{k+1}_0k^k
\end{equation}
and $F_{nm}|_{x_n=0}=0$ for $m=1,2,\ldots,n$.
Then for any $k\geq 1$, we have
\begin{equation*}
\sup_{t\in(0,T]}t^k\|\p^k_t u(t,\cdot)\|_{L^\infty(\bR^n_+)}\leq  M^{k+1}k^k
\end{equation*}
for a sufficiently large constant $M$. Consequently, $u(t,x)$ is analytic in time for $t\in(0,1]$.
\end{theorem}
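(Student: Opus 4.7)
The plan is to establish the pointwise bound
\[
\sup_{t\in(0,T]} t^k\|\partial_t^k u(t,\cdot)\|_{L^\infty(\bR^n_+)} \leq M^{k+1}k^k
\]
by strong induction on $k\ge 1$, starting from the mild-solution representation \eqref{milds} and adapting the real-variable scheme of \cite{DZ:2019arxiv1} to the half-space Stokes kernels $G_{ij}$ and $K_{ijq}$. Once this estimate is in hand, the time analyticity on $(0,1]$ follows from the same Taylor-series argument used at the end of the proof of Theorem \ref{th1}, together with Stirling's formula to convert the $k^k$ growth into a positive radius of convergence proportional to $t_0/M$ at each base point $t_0>0$.

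The base case $k=1$ follows by differentiating \eqref{milds} once in $t$ and invoking Propositions \ref{pro1} and \ref{pro2} with $s=1$, combined with $|u|\le C_0$, the $k=1$ instance of \eqref{fassum}, and the local integrability of each kernel. For the inductive step the term $\int G(t;x,y)u_{0j}(y)\,dy$ is treated by taking all $k$ derivatives inside the integral: Proposition \ref{pro1} with $s=k$ yields $|\partial_t^k G(t;x,y)|\lesssim C^k k^k t^{-k}$ times a spatial factor uniformly bounded in $L^1_y$, producing the desired $C^{k+1}k^k t^{-k}\|u_0\|_\infty$.

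The heart of the argument is the treatment of each Duhamel term
\[
\mathcal{I}(t,x) = \int_0^t\!\!\int_{\bR^n_+} \mathcal{K}(t-\tau;x,y)\,g(\tau,y)\,dy\,d\tau,
\]
where $\mathcal{K}$ is one of $\partial_{y_k}G_{ij}$, $\partial_{y_\beta}G_{in}$, or $\partial^2_{y_\beta y_\gamma}K_{ijq}$, and $g$ is a quadratic expression in $u$ (or an entry of $F$). I would split the $\tau$-interval at the midpoint $t/2$. On $(0,t/2]$ we have $t-\tau\ge t/2$, so all $k$ time derivatives can be moved onto the kernel; Propositions \ref{pro1}--\ref{pro2} then supply $|\partial_t^k\mathcal{K}(t-\tau;x,y)|\lesssim C^k k^k(t-\tau)^{-k}$ multiplied by a spatial factor whose $L^1_y$ norm has size $(t-\tau)^{-1/2}$, contributing an acceptable $C^{k+1}k^k t^{-k}\|g\|_\infty$. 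On $[t/2,t)$ I change variables $\sigma=t-\tau$ and place $\partial_t^k$ on $g(t-\sigma,\cdot)$, so no derivative touches the singular kernel at $\sigma\to 0^+$; the Leibniz rule applied to $g=u_\alpha u_\beta$ generates a sum of terms $\partial_t^{j_1}u\cdot\partial_t^{j_2}u$ with $j_1+j_2=k$, each controlled by the inductive hypothesis when $j_1,j_2<k$, while the forcing piece $g=F_{\alpha\beta}$ is controlled by \eqref{fassum}.

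The main obstacle will be the combinatorial bookkeeping needed to close the induction, together with the handling of the two endpoint Leibniz terms ($j_1=k$ or $j_2=k$) that reintroduce $\partial_t^k u$ on the right-hand side. The Leibniz sum from the $[t/2,t)$ piece produces
\[
\sum_{j_1+j_2=k}\binom{k}{j_1}M^{j_1+1}j_1^{j_1}M^{j_2+1}j_2^{j_2}\int_0^{t/2}\|\mathcal{K}(\sigma;x,\cdot)\|_{L^1_y}(t-\sigma)^{-k}\,d\sigma,
\]
which I would absorb into $M^{k+1}k^k t^{-k}$ using the elementary bound $\sum_{j_1+j_2=k}\binom{k}{j_1}j_1^{j_1}j_2^{j_2}\lesssim k^k$, the fact that $t-\sigma\asymp t$ on $[0,t/2]$, and the $L^1_y$-size $\sigma^{-1/2}$ of the kernel (giving $\int_0^{t/2}\sigma^{-1/2}d\sigma\lesssim\sqrt{t}$). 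The extra $\sqrt{t}$ supplies precisely the slack required to close the induction once $M$ is chosen sufficiently large depending only on $n$ and $C_0$; the endpoint terms $u\cdot\partial_t^k u$ are handled by running the argument first on a short time interval $(0,T_\ast]$ where $\sqrt{T_\ast}$ is small enough to absorb the self-referential $\partial_t^k u$ factor into the left-hand side, and then translating in time to cover all of $(0,1]$ using the uniform bound $|u|\le C_0$. Verifying uniform $L^1_y$ integrability of each of the five kernels in \eqref{milds}, especially the most singular $\partial^2_{y_\beta y_\gamma}K_{ijq}$, is the principal technical check.
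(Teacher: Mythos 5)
Your overall strategy (induction, real-variable estimates on the half-space Stokes kernels, isolating the self-referential $u\cdot\partial_t^k u$ term) is in the same spirit as the paper, but there is a concrete gap that will prevent the induction from closing as you have set it up.

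The ``elementary bound'' $\sum_{j_1+j_2=k}\binom{k}{j_1}j_1^{j_1}j_2^{j_2}\lesssim k^k$ is false. By Stirling's formula, $\binom{k}{j}j^{j}(k-j)^{k-j}\asymp k^k\sqrt{k/(j(k-j))}$, and since $\sum_{j=1}^{k-1}1/\sqrt{j(k-j)}$ is bounded, the sum is actually of order $k^{k+1/2}$. With the inductive hypothesis $t^j\|\partial_t^j u\|_\infty\le M^{j+1}j^j$, the Leibniz sum therefore produces a factor $k^{k+1/2}$, which the $\sqrt{t}$ slack cannot repair (the loss is in $k$, not in $t$), so the bound $M^{k+1}k^k t^{-k}$ cannot be recovered. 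This is precisely why the paper works with the auxiliary quantity $\partial_t^k(t^k u)$ and an inductive hypothesis of the shape $\|\partial_t^k(t^k u)\|_\infty\le M^{k-1/2}k^{k-2/3}$: the deficit $k^{-2/3}$ (with $2/3>1/2$) is exactly what makes the combinatorial estimate of Lemma~\ref{lid1} true, namely $\sum_{j=1}^{k-1}\binom{k}{j}j^{j-2/3}(k-j)^{k-j-2/3}\lesssim k^{k-2/3}$. The statement of Theorem~\ref{thns} in terms of $t^k\partial_t^k u$ and $k^k$ is then recovered by a short separate induction on $j$ using the identity $\partial_t^k(t^j u)=k\partial_t^{k-1}(t^{j-1}u)+t\partial_t^k(t^{j-1}u)$.

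A second, smaller issue is your midpoint split of the Duhamel integral at $\tau=t/2$: since the split point depends on $t$, each time derivative produces boundary terms involving $\partial_t^{j_1}\mathcal{K}(t/2;\cdot)$ and $\partial_t^{j_2}g(t/2,\cdot)$ which you have not accounted for, and with $k$ derivatives this generates a whole second combinatorial sum. The paper sidesteps this completely by writing $t^k=((t-\tau)+\tau)^k$ and binomially expanding \emph{before} differentiating; the factor $(t-\tau)^j$ vanishes to order $j$ at $\tau=t$ and the factor $\tau^{k-j}$ vanishes to order $k-j$ at $\tau=0$, so no boundary terms ever arise when $j$ derivatives are placed on the kernel and $k-j$ on the data (together with the product identity of Lemma~\ref{lid2} for the nonlinearity). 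Your split-interval scheme could likely be patched, but you would still need to replace the false $k^k$ combinatorial bound with a strictly sub-$k^k$ one and carry the auxiliary quantity $\partial_t^k(t^ku)$ through the induction, which essentially reproduces the paper's architecture.
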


The proof of Theorem \eqref{thns} relies on taking time derivatives of the integral representation \eqref{milds}. Here a difficult is that the time derivatives of the kernel $G_{ij}$ and $K_{ijq}$ are not locally integrable in space time near the origin, let alone the high order derivatives. We will follow the technique used in \cite{DZ:2019arxiv1} to allow differentiation. The following lemmas will be used frequently.
\begin{lemma}[Lemma 3.1 of \cite{DZ:2019arxiv1}]\label{lid1}
For any $k\geq 1$, we have
\begin{equation*}
\sum^{k-1}_{j=1}\binom{k}{j}j^{j-2/3}(n-j)^{n-j-2/3}\leq Ck^{k-2/3},
\end{equation*}
where $C$ is a constant independent of $k$.
\end{lemma}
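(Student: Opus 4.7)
I read the right factor in the summand as $(k-j)^{k-j-2/3}$ (the appearance of $n$ looks like a typographical slip, since $n$ is the ambient dimension elsewhere; the estimate is only meaningful in terms of the summation parameters $j$ and $k$). The plan is to reduce the sum to a convergent $p$-series estimate via Stirling's formula, exploiting that the exponent $-2/3$ is just large enough to make the tails summable.

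First I would invoke the two-sided Stirling bound $m^m \asymp m!\, e^m / \sqrt{m}$ for $m\ge 1$, which gives
\begin{equation*}
j^{j-2/3}\asymp \frac{j!\,e^j}{j^{1/2+2/3}}=\frac{j!\,e^j}{j^{7/6}},
\qquad
(k-j)^{k-j-2/3}\asymp \frac{(k-j)!\,e^{k-j}}{(k-j)^{7/6}}.
\end{equation*}
Multiplying by the binomial coefficient collapses the factorials:
\begin{equation*}
\binom{k}{j} j^{j-2/3}(k-j)^{k-j-2/3}
\asymp \frac{k!\,e^k}{j^{7/6}\,(k-j)^{7/6}}.
\end{equation*}
Summing over $1\le j\le k-1$, it thus suffices to show
\begin{equation*}
\sum_{j=1}^{k-1}\frac{1}{j^{7/6}(k-j)^{7/6}}\le \frac{C}{k^{7/6}},
\end{equation*}
because then the whole sum is bounded by $C\,k!\,e^k/k^{7/6}$, and one final application of Stirling (in the form $k!\le C\sqrt{k}\,k^k e^{-k}$) yields $C k^{k+1/2-7/6}=C k^{k-2/3}$, as desired.

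For the $p$-series bound I would split the range at $j=\lfloor k/2\rfloor$. On $1\le j\le k/2$ we have $k-j\ge k/2$, hence $(k-j)^{-7/6}\le 2^{7/6} k^{-7/6}$, and
\begin{equation*}
\sum_{j=1}^{\lfloor k/2\rfloor}\frac{1}{j^{7/6}(k-j)^{7/6}}
\le \frac{C}{k^{7/6}}\sum_{j=1}^{\infty}\frac{1}{j^{7/6}}\le \frac{C'}{k^{7/6}},
\end{equation*}
the inner sum being finite since $7/6>1$. The other half is handled symmetrically by the substitution $j\mapsto k-j$. This gives the claimed bound on the $p$-series and closes the proof.

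The only delicate point is the endpoint $j=1$ (and by symmetry $j=k-1$), where Stirling is not literally asymptotic but still valid as a two-sided inequality with a universal constant — one can either verify these terms by hand (they contribute at most $C\binom{k}{1}(k-1)^{k-1-2/3}\le C k^{k-2/3}$ directly) or simply use that the Stirling inequality $m!\le C m^{m+1/2}e^{-m}$ and its reverse hold for all $m\ge 1$. I expect this endpoint bookkeeping, together with choosing the constants in the Stirling comparisons uniformly in $k$, to be the only real nuisance; the analytic core of the argument is the convergence of $\sum j^{-7/6}$.
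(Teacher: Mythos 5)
Your reading of the statement is right: the $n$ is a typo for $k$, and the intended inequality is $\sum_{j=1}^{k-1}\binom{k}{j}j^{j-2/3}(k-j)^{k-j-2/3}\le Ck^{k-2/3}$. The paper itself gives no proof (the lemma is quoted verbatim from \cite{DZ:2019arxiv1}), but your argument is correct and is essentially the standard one used there: the two-sided Stirling bound turns the summand into $C\,k!\,e^k\,j^{-7/6}(k-j)^{-7/6}$ up to uniform constants, the split at $j=\lfloor k/2\rfloor$ gives $\sum_{j=1}^{k-1}j^{-7/6}(k-j)^{-7/6}\le Ck^{-7/6}$ since $7/6>1$, and a final Stirling conversion recovers $Ck^{k-2/3}$; the endpoint terms are covered because the Stirling inequalities hold with universal constants for all $m\ge 1$. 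No gaps.
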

\begin{lemma}[Lemma 3.2 of \cite{DZ:2019arxiv1}]\label{lid2}
Let $f$ and $g$ be two smooth functions on $\bR$. For any integer $k\geq 1$, we have
\begin{align*}
\p^k_t(t^kf(t)g(t))=&\sum^{k}_{j=0}\binom{k}{j} \p^j_t(t^jf(t))\p^{k-j}_t(t^{k-j}g(t))\\
                      &-k\sum^{k-1}_{j=0}\binom{k-1}{j} \p^j_t(t^jf(t))\p^{k-1-j}_t(t^{k-1-j}g(t)).
\end{align*}
\end{lemma}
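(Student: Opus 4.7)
Plan: The identity is $\mathbb{R}$-bilinear in $f$ and $g$, and at any fixed $t$ both sides depend only on the Taylor data $f(t), f'(t), \ldots, f^{(k)}(t)$ and analogous data for $g$ (since $f_j := \partial^j_t(t^j f)$ is a polynomial in $t$ with coefficients linear in $f, f', \ldots, f^{(j)}$, explicitly $f_j = \sum_i \binom{j}{i} \frac{j!}{(j-i)!} t^{j-i} f^{(j-i)}$). Thus, by replacing $f$ and $g$ with their order-$k$ Taylor polynomials at each point and then using bilinearity, it suffices to prove the identity when $f(t) = t^a$ and $g(t) = t^b$ for nonnegative integers $a, b$.

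In that monomial case $f_j(t) = \frac{(a+j)!}{a!} t^a$, $g_l(t) = \frac{(b+l)!}{b!} t^b$, and $\partial^k_t(t^k fg) = \frac{(a+b+k)!}{(a+b)!} t^{a+b}$. After dividing through by $t^{a+b}$ and rewriting $\binom{k}{j}\frac{(a+j)!(b+k-j)!}{a!\,b!} = k!\binom{a+j}{j}\binom{b+k-j}{k-j}$, the claim reduces to the purely combinatorial identity
\begin{equation*}
\binom{a+b+k}{k} = \sum_{j=0}^{k} \binom{a+j}{j}\binom{b+k-j}{k-j} - \sum_{j=0}^{k-1} \binom{a+j}{j}\binom{b+k-1-j}{k-1-j}.
\end{equation*}
The Chu--Vandermonde identity $\sum_{j=0}^{m}\binom{a+j}{j}\binom{b+m-j}{m-j} = \binom{a+b+m+1}{m}$, applied at $m=k$ and at $m=k-1$, rewrites the right-hand side as $\binom{a+b+k+1}{k} - \binom{a+b+k}{k-1}$, which equals $\binom{a+b+k}{k}$ by Pascal's rule.

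The main subtlety is the conceptual reduction to monomials: one must notice that both sides, as differential expressions, factor through the $k$-jets of $f$ and $g$ at each point, so the polynomial case implies the smooth case. An alternative approach avoiding this step is direct induction on $k$: from the finite Leibniz identity $\partial^k_t(th) = t\partial^k_t h + k\partial^{k-1}_t h$ applied with $h = t^{k-1} fg$, one obtains
\begin{equation*}
\partial^k_t(t^k fg) = t\,\partial_t \partial^{k-1}_t(t^{k-1} fg) + k\,\partial^{k-1}_t(t^{k-1} fg);
\end{equation*}
one then substitutes the inductive formula for $\partial^{k-1}_t(t^{k-1} fg)$ and uses the elementary recurrence $t f_j' = f_{j+1} - (j+1) f_j$, which follows from $f_{j+1} = \partial^{j+1}_t(t \cdot t^j f) = t f_j' + (j+1) f_j$. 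The four sums that arise collapse via Pascal's rule to produce exactly the stated formula at level $k$; the bookkeeping, while routine, is the most error-prone step of this alternative route, which is why the monomial reduction is the cleaner approach.
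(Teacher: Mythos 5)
The paper does not prove this lemma at all---it is imported verbatim, with proof, from Lemma 3.2 of \cite{DZ:2019arxiv1}---so there is no in-paper argument to compare against. Your proof is correct and self-contained. The reduction to monomials is legitimate for exactly the reason you give: both sides are bilinear differential expressions of order at most $k$ in each argument, so at a fixed $t_0$ they factor through the $k$-jets of $f$ and $g$ there; replacing $f,g$ by their degree-$k$ Taylor polynomials and using bilinearity reduces the claim to $f=t^a$, $g=t^b$. The monomial computation checks out: $\p^j_t(t^{a+j})=\frac{(a+j)!}{a!}t^a$, the rewriting $\binom{k}{j}\frac{(a+j)!(b+k-j)!}{a!\,b!}=k!\binom{a+j}{j}\binom{b+k-j}{k-j}$ is correct, the Chu--Vandermonde form $\sum_{j=0}^{m}\binom{a+j}{j}\binom{b+m-j}{m-j}=\binom{a+b+m+1}{m}$ is the standard one (obtainable by upper negation), and Pascal's rule closes the argument. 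One can also sanity-check $k=1$ directly. Your sketched alternative by induction via $\p^k_t(th)=t\p^k_th+k\p^{k-1}_th$ and $f_{j+1}=tf_j'+(j+1)f_j$ is the route closer in spirit to the elementary Leibniz manipulations used in \cite{DZ:2019arxiv1}, but since you only outline it, the monomial argument should be regarded as your actual proof, and it stands on its own.
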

By applying Propositions \ref{pro1} and \ref{pro2}, we deduce the following estimates for the kernels $G_{ij}$ and $K_{ijq}$.
\begin{proposition}\label{p3.4}
There exists a constant $C$, independent $k$, such that for any $k\in \bN\cup \{0\}$,
\begin{equation}
        \label{e3.32}
\|t^{k}\p^k_tG_{ij}(t;x,y)\|_{L^1_y}\leq C^{k+1}k^{k},\q \|t^{k}\p^k_t\na_y G_{ij}(t;x,y)\|_{L^1_y}\leq C^{k+1}k^{k}t^{-1/2},
\end{equation}
\begin{equation*}
\|t^{k}\p^k_t \p^2_{y_\beta y_\g}K_{ijq}(t;x,y)\|_{L^1_y}\leq C^{k+1}k^{k}t^{-1/2}.
\end{equation*}
\end{proposition}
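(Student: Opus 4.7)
The plan is to reduce every assertion to integrating the pointwise bounds already provided by Propositions \ref{pro1} and \ref{pro2}. The combinatorial factor $C^s s^s$ is built into those bounds, so no further induction or algebraic manipulation is needed; the task is purely to evaluate the $y$-integrals over the half space.

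For $G_{ij}$ I would exploit the decomposition \eqref{3.10e}, $G_{ij}=\dl_{ij}\G(t,x-y)+G^\ast_{ij}$, and treat the two summands separately. For the full-space heat kernel $\G$, the analyticity of the heat semigroup on $L^1(\bR^n)$ (or equivalently a Cauchy-integral argument on a disk of radius $t/2$ in complex time, using $|\G(z,x)|$ on that contour) yields
\begin{equation*}
\|t^k\p^k_t\G(t,\cdot)\|_{L^1(\bR^n)}\le C^{k+1}k!,\qq \|t^k\p^k_t\na\G(t,\cdot)\|_{L^1(\bR^n)}\le C^{k+1}k!\,t^{-1/2},
\end{equation*}
and Stirling gives the claimed $k^k$. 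For $G^\ast_{ij}$, I plug in Proposition \ref{pro1} with $|k|=0$ and $|m|\in\{0,1\}$; after extracting the $C^s s^s t^{-s}$ factor the problem reduces to estimating
\begin{equation*}
I_\al(t,x):=\int_{\bR^n_+}(|x-y^\ast|^2+t)^{-(n+\al)/2}e^{-cy_n^2/t}\,dy,\q \al=0,1.
\end{equation*}
Writing $|x-y^\ast|^2=|x'-y'|^2+(x_n+y_n)^2$ and integrating first in $y'\in\bR^{n-1}$ yields $C\int_0^\i((x_n+y_n)^2+t)^{-(1+\al)/2}e^{-cy_n^2/t}\,dy_n$; the crude bound $((x_n+y_n)^2+t)^{-1/2}\le t^{-1/2}$ together with the substitution $y_n=\s{t}\,z$ gives $I_0=O(1)$ and $I_1=O(t^{-1/2})$. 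The case $m=e_n$ of Proposition \ref{pro1} already carries the explicit $t^{-1/2}$ and its $y$-integral reduces to $I_0$, so all three bounds for $G_{ij}$ follow.

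For the $K_{ijq}$ estimate, since $\b,\g\in\{1,\ldots,n-1\}$, I apply Proposition \ref{pro2} with $|k'|=2$, obtaining the pointwise estimate $C^s s^s t^{-s}(|x-y|^2+t)^{-(n+1)/2}$. Enlarging the domain from $\bR^n_+$ to $\bR^n$ and rescaling $y=x+\s{t}\,z$ gives
\begin{equation*}
\int_{\bR^n_+}(|x-y|^2+t)^{-(n+1)/2}\,dy\le Ct^{-1/2}\int_{\bR^n}(|z|^2+1)^{-(n+1)/2}\,dz=Ct^{-1/2},
\end{equation*}
which delivers the last bound.

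The main obstacle, such as it is, lies in verifying that the $L^1$-norm of $\p^k_t\G$ genuinely carries the sharp $k!$ (equivalently $k^k$) factor rather than something larger; the cleanest route is Cauchy's integral formula on a disk of radius $t/2$ in the complex time variable, together with the fact that $\sup_{|z-t|=t/2}|\G(z,\cdot)|$ is an integrable Gaussian with mass $O(1)$. Beyond this, every step is a routine integration of Gaussian or algebraic tails, and the desired bounds fall out immediately.
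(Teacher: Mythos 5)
Your proposal is correct and follows essentially the same route as the paper: decompose $G_{ij}$ via \eqref{3.10e}, treat the whole-space heat kernel $\G$ and the correction $G^\ast_{ij}$ separately, feed the pointwise bounds of Propositions \ref{pro1} and \ref{pro2} (with the appropriate multi-indices $|m|\in\{0,1\}$ and $|k'|=2$) into the $y$-integral over $\bR^n_+$, and use the rescaling $y\to\s{t}\,y$ together with the Gaussian factor $e^{-cy_n^2/t}$ to evaluate. The only real difference is cosmetic: the paper bounds $\p^k_t\G$ by invoking the semigroup property of the heat kernel to get $|\p^k_t\G(t;x,y)|\leq C^kk^kt^{-k-n/2}\exp(-|x-y|^2/(8t))$ directly, whereas you go through Cauchy's integral formula on a complex-time disk of radius $t/2$ and then Stirling; both give the required $C^{k+1}k^k$ and both are standard.
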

\begin{proof}
We only prove the first inequality of \eqref{e3.32} as the others are essentially the same. From \eqref{3.10e}
\begin{equation*}
\|t^k\p^k_tG_{ij}(t;x,y)\|_{L^1_y}\leq \|t^k\p^k_t\G(t;x,y)\|_{L^1_y}+\|t^k\p^k_tG^\ast_{ij}(t;x,y)\|_{L^1_y}.
\end{equation*}
Since
\begin{equation*}
\G(t;x,y)=
\begin{aligned}
(4\pi t)^{-n/2}\exp\lt(-\f{|x-y|^2}{4t}\rt)\q \forall t> 0,\\
\end{aligned}
\end{equation*}
by using the semi-group property of $\Gamma$, for any $t>0$,
\begin{equation*}
|\p^k_t\G(t;x,y)|\leq C^{k}k^{k}t^{-k-n/2}\exp\lt(-\f{|x-y|^2}{8t}\rt).
\end{equation*}
Thus a direct calculation shows that
\begin{equation}
        \label{Gamma1}
\|t^k\p^k_t\G(t;x,y)\|_{L^1_y}\leq CC^{k}k^{k}.
\end{equation}
Using Proposition \ref{pro1}, we have
\begin{equation*}
\begin{aligned}
&\|t^{k}\p^k_tG^\ast_{ij}(t;x,y)\|_{L^1_y}\\
\leq& C^kk^{k}\int_{\bR^n_+}(|x-y^\ast|^2+t)^{-n/2}e^{-\f{cy^2_n}{t}}\ dy\\
\leq& C^kk^{k}\int_{\bR^n_+}(|x'-y'|^2+|x_n+y_n|^2+t)^{-n/2}e^{-\f{cy^2_n}{t}}\ dy_ndy'\\
\leq& C^kk^{k}\int_{\bR^n_+}(|x'-y'|^2+y^2_n+t)^{-n/2}e^{-\f{cy^2_n}{t}}\ dy_ndy'.
\end{aligned}
\end{equation*}
Here in the last line, we used the fact that $x_n+y_n\geq y_n$ since $x,y\in\bR^n_+$. Making a change of variables $x'-y':=\t{y}'$ and still denoting $\t{y}'$ by $y'$, we have
\begin{equation}
        \label{kerne1}
\begin{aligned}
&\|t^{k}\p^k_tG^\ast_{ij}(t;x,y)\|_{L^1_y}\\
\leq& C^kk^{k}\int_{\bR^n_+}(|y'|^2+y^2_n+t)^{-n/2}e^{-\f{cy^2_n}{t}}\ dy_ndy'\q (y\rightarrow y\s{t})\\
=& C^kk^{k}\int_{\bR^n_+}(|y'|^2+y^2_n+1)^{-n/2}e^{-cy^2_n}\ dy_ndy'\\
\leq& C^kk^{k}\int_{\bR^n_+}(|y'|^2+1)^{-n/2}\ dy'
\leq CC^kk^{k}.
\end{aligned}
\end{equation}
A combination of \eqref{Gamma1} and \eqref{kerne1} proves the first inequality of \eqref{e3.32}.
\end{proof}

Theorem \ref{thns} is a direct consequence of the following proposition.

\begin{proposition}\label{p3.5}
Under the assumptions of Theorem \ref{thns}, for any $k\geq 1$, we have
\begin{equation}
        \label{ee3.21}
\sup_{t\in(0,T]}\|\p^k_t(t^ku(t,\cdot))\|_{L^\infty(\bR^n)}\leq M^{k-1/2}k^{k-2/3}
\end{equation}
for a sufficiently large $M\geq 1$.
\end{proposition}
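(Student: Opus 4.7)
\textbf{Proof plan for Proposition \ref{p3.5}.}

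The natural approach is induction on $k$. The base case $k=1$ is a direct computation from \eqref{milds}: differentiating once in $t$, multiplying by $t$, and applying the $k=0,1$ cases of Proposition \ref{p3.4} together with $|u|\le C_0$ and \eqref{fassum}. Suppose then that \eqref{ee3.21} holds for all $j\le k-1$; I would prove it for $k$.

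In each of the five terms of \eqref{milds} I would first change the time variable $\tau=st$, so that every convolution becomes
$$\int_0^t\!\!\int_{\bR_+^n}\! K(t-\tau;x,y)v(\tau,y)\,dy\,d\tau=t\int_0^1\!\!\int_{\bR_+^n}\! K(t(1-s);x,y)v(st,y)\,dy\,ds,$$
with the $t$-scaling explicit. Applying $\p_t^k(t^k\cdot)$ to both sides, the algebraic identity
$$\p_t^j\bigl(t^j\Phi(at)\bigr)=\bigl[\p_\tau^j\bigl(\tau^j\Phi(\tau)\bigr)\bigr]_{\tau=at},\qquad a>0,$$
combined with Lemma \ref{lid2} applied to the product of kernel and nonlinearity, distributes the $k$ time derivatives across the factor $K(t(1-s);x,y)$ and the nonlinear factor $v=u\otimes u-F$, producing a binomial sum of terms of the form $\p_t^{j_1}(t^{j_1}K)(t(1-s);x,y)\,\p_t^{k-j_1}(t^{k-j_1}v)(st,y)$.

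Applying Lemma \ref{lid2} a second time to the quadratic factor $u_iu_j$ inside $v$ further expands $\p_t^m(t^mu_iu_j)$ as a binomial sum of $\p_t^l(t^lu_i)\cdot\p_t^{m-l}(t^{m-l}u_j)$. The solution factors are controlled by the inductive hypothesis, the kernel factors by Proposition \ref{p3.4} (giving $(t(1-s))^{-1/2}C^{j_1+1}j_1^{j_1}$ for the gradient-type kernels), and the $F$-contributions by \eqref{fassum}. Substituting these bounds and carrying out the $s$-integration (where $(1-s)^{-1/2}$ is integrable and the remaining $s$-factors from the solution are bounded), one obtains an iterated binomial sum which is collapsed using Lemma \ref{lid1} twice to the desired bound $M^{k-1/2}k^{k-2/3}$, provided $M$ is chosen large enough to absorb the universal constants from Proposition \ref{p3.4} and the constants in Lemmas \ref{lid1} and \ref{lid2}.

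The main obstacle is the combinatorial bookkeeping: one must verify that through the double Leibniz expansion the exponent $k-2/3$ is preserved rather than degraded (this is the purpose of Lemma \ref{lid1}), that the $s$-integrals remain genuinely convergent (so the $(1-s)^{-1/2}$ from the kernel never meets a non-integrable $s$-factor produced by the distribution of time derivatives), and that the five distinct kernels appearing in \eqref{milds}, including the delicate $\p_{y_\beta}\p_{y_\gamma}K_{ijq}$ term, all satisfy compatible $L^1_y$ bounds of the form $C^{k+1}k^kt^{-1/2}$. Handling the initial-data contribution $\int G_{ij}u_{0j}$ is separate but easier, following directly from Proposition \ref{p3.4} with $|u_0|\le C_0$.
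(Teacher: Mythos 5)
Your outline follows the same broad architecture as the paper's proof — induction on $k$, distribution of time derivatives using the kernel bounds of Proposition \ref{p3.4}, and the two combinatorial lemmas \ref{lid1} and \ref{lid2}. Your substitution $\tau=st$ is a cosmetically different route from the paper's $(t-\tau+\tau)^k$ binomial expansion (both serve to push each power of $t$ onto the argument of the factor that receives the corresponding derivatives), and after undoing the substitution the resulting integrals coincide, so this is not an objection.

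There is, however, a genuine gap. You assert that after the double Leibniz expansion ``the solution factors are controlled by the inductive hypothesis.'' This is false for the extremal terms. When all $k$ derivatives land on the nonlinearity (your $j_1=0$), the factor is $\p_t^k\bigl(t^k u^2\bigr)$, and the $j=0$ and $j=k$ terms in Lemma \ref{lid2} produce $2u\cdot\p_t^k(t^k u)$ — precisely the quantity $\Psi(t):=\|\p_t^k(t^k u(t,\cdot))\|_{L^\infty}$ you are trying to bound, which the inductive hypothesis (valid only for orders $\le k-1$) does not control. After the kernel $L^1_y$ bound and the $s$-integration this contributes a term of the form $C\int_0^t(t-\tau)^{-1/2}\Psi(\tau)\,d\tau$, and the induction does not close algebraically: one has
\begin{equation*}
\Psi(t)\le CM^{k-2/3}k^{k-2/3}+C\int_0^t(t-\tau)^{-1/2}\Psi(\tau)\,d\tau,
\end{equation*}
with a constant $C$ depending on $C_0$ that cannot be made small. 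The paper removes the singular weight by iterating the inequality once (Fubini giving a bounded inner integral) and then invokes Gronwall's inequality over $t\in(0,1]$, finally absorbing the resulting exponential factor by enlarging $M$. Without this Gronwall step your argument cannot conclude, even in the base case $k=1$, since $\p_t(tu^2)$ already contains $2u\,\p_t(tu)$. You should also be careful that the extra factor of $t$ coming from the Jacobian of $\tau=st$ makes the expression $t^{k+1}\int_0^1\cdots$ rather than $t^k\int_0^1\cdots$, so the identity $\p_t^j(t^j\Phi(at))=[\p_\tau^j(\tau^j\Phi)]_{\tau=at}$ does not apply directly as stated; this is a bookkeeping nuisance rather than an obstruction, but it needs to be addressed.
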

\begin{proof}
For simplicity, we still abbreviate \eqref{milds} by \eqref{milds3}. From Proposition \ref{p3.4}, we have for any $j\in \bN\cap\{0\}$,
\begin{equation*}
\|t^j\p^j_t \t{K}(t;x,y)\|_{L^1_y}\leq C^{j+1}j^{j}t^{-1/2}.
\end{equation*}
Using the Leibniz rule and Lemma \ref{lid1}, we have
\begin{equation}
        \label{etK}
\|\p^j_t (t^j\t{K}(t;x,y))\|_{L^1_y}\leq C^{j+1}j^{j}t^{-1/2}
\end{equation}
and
\begin{equation}
        \label{3.45}
\|\p^j_t (t^kG(t;x,y))\|_{L^1_y}\leq C^{j+1}j^{j}.
\end{equation}

Now we prove Proposition \ref{p3.5} by induction. Using \eqref{milds3}, we obtain
\begin{align*}
\p^k_t(t^k u(t,x))&=\p^k_t\bigg(\int_{\bR^n_+}t^kG(t;x,y)u_{0}(y)\ dy\bigg)\\
&\quad +\p^k_t\bigg(t^k\int^t_0\int_{\bR^n_+}\t{K}(t-\tau;x,y)F(\tau,y)\ dyd\tau\bigg)\\
&\quad
+\p^k_t\bigg(t^k\int^t_0\int_{\bR^n_+}\t{K}(t-\tau;x,y)u^2(\tau,y)\ dyd\tau\bigg)\\
&:= I_1+I_2+I_3.
\end{align*}
By using \eqref{3.45}, we have
\begin{equation*}
|I_1|\leq \|u_0\|_{L^\infty}\|\p^k_t(t^k G)\|_{L^1}\leq C_0C^{k+1}k^{k-2/3}\leq M^{k-2/3}k^{k-2/3}
\end{equation*}
for a sufficiently large $M$. To estimate $I_2$, we proceed as follows
\begin{equation}
        \label{di2}
\begin{aligned}
I_2=&\p^k_t\bigg((t-\tau+\tau)^k\int^t_0\int_{\bR^n_+}
\t{K}(t-\tau;x,y)F(\tau,y)\ dyd\tau\bigg)\\
=&\sum^k_{j=0} \binom{k}{j}
\p^k_t\int^t_0\int_{\bR^n_+}(t-\tau)^j\t{K}(t-\tau;x,y)\tau^{k-j}F(\tau,y)\ dyd\tau\\
=&\sum^k_{j=0} \binom{k}{j}\p^{k-j}_t\int^t_0\int_{\bR^n_+}\p^j_t\big((t-\tau)^j
\t{K}(t-\tau;x,y)\big)\tau^{k-j}F(\tau,y)\ dyd\tau\\
=&\sum^k_{j=0} \binom{k}{j}
\p^{k-j}_t\int^t_0\int_{\bR^n_+}\p^j_\tau\big(\tau^j
\t{K}(\tau;x,y)\big)(t-\tau)^{k-j}F(t-\tau,y)\ dyd\tau\\
=&\sum^k_{j=0} \binom{k}{j}
\int^t_0\int_{\bR^n_+}\p^j_\tau\big(\tau^j\t{K}(\tau;x,y)\big)\p^{k-j}_t\big((t-\tau)^{k-j}
F(t-\tau,y)\big)\ dyd\tau,
\end{aligned}
\end{equation}
where in the fourth line of the above equality, we made a change of variables $\tau$ to $t-\tau$. From \eqref{fassum}, the Leibniz rule, and Lemma \ref{lid1}, we have,
\begin{equation}
        \label{fassum1}
\|\p^k_t(t^k F)\|_{L^\infty((0,\infty)\times \bR^n_+)}\leq  C^{k}k^{k-1}.
\end{equation}
The above inequality \eqref{fassum1}, \eqref{etK}, and Lemma \ref{lid1} together imply
\begin{equation*}
\begin{aligned}
&|I_2|\\
\leq& \sum^k_{j=0} \binom{k}{j}
\|\p^j_\tau(\tau^j\t{K}(\tau))\|_{L^{1}((0,t]\times \bR^n_+)}
\|\p^{k-j}_t\big((t-\tau)^{k-j}F(t-\tau,y))\|_{L^\infty((0,t]\times \bR^n_+)}\\
\leq& \sum^k_{j=0} \binom{k}{j}\int^t_0\tau^{-1/2}\ d\tau\cdot C^{j+1}j^{j-2/3} C^{k-j+1}(k-j)^{k-j-2/3}\\
\leq& C^{k+1}k^{k-2/3}t^{1/2}\leq M^{k-{2/3}}k^{k-2/3},\q (\text{recalling }t\leq 1).
\end{aligned}
\end{equation*}
For $I_3$, similar to \eqref{di2} we have
\begin{equation}
                \label{ei3}
I_3=\sum^k_{j=0} \binom{k}{ j}\int^t_0\int_{\bR^n_+}\p^j_\tau\big(\tau^j\t{K}(\tau;x,y)\big)\p^{k-j}_t\big((t-\tau)^{k-j}u^2 (t-\tau,y)\big)\ dyd\tau.
\end{equation}
We estimate the right-hand side of \eqref{ei3} by considering $j=0,k$ and $1\leq j\leq k-1$ separately. By the inductive assumption and Lemmas \ref{lid1} and \ref{lid2}, we have
\begin{equation*}
\begin{aligned}
&|\p^{k}_t\big(t^ku^2(t,y)\big)|\\
=&\Big|2u(t,y)\p^k_t(t^ku(t,y))+\sum^{k-1}_{j=1}\binom{k}{j} \p^j_t(t^ju(t,y))\p^{k-j}_t(t^{k-j}u(t,y))\\
                      &-k\sum^{k-1}_{j=0}\binom{k-1}{j} \p^j_t(t^ju(t,y))\p^{k-1-j}_t(t^{k-1-j}u(t,y))\Big|\\
\leq&2C_0 |\p^{k}_t\big(t^ku(t,y)\big)|+M^{k-3/4}k^{k-2/3},
\end{aligned}
\end{equation*}
and for $j=1,2,\ldots,k-1$,
\begin{equation*}
|\p^{j}_t\big(t^ju^2(t,y)\big)|\leq M^{j-3/4}{j^{j-2/3}}
\end{equation*}
for a sufficiently large $M$. With these in hand, similar to \eqref{di2} we get
\begin{equation*}
\begin{aligned}
|I_3|=&\int^t_0\int_{\bR^n_+}\p^k_\tau\big(\tau^k\t{K}(\tau;x,y)\big)
u^2(t-\tau,y)\ dyd\tau\\
&+\int^t_0\int_{\bR^n_+}\t{K}(\tau;x,y)\p^{k}_t\big((t-\tau)^{k}u^2(t-\tau,y)\big)\ dyd\tau\\
&+\sum^{k-1}_{j=1} \binom{k}{j}\int^t_0\int_{\bR^n_+}\p^j_\tau\big(\tau^j\t{K}(\tau;x,y)\big)\p^{k-j}_t\big((t-\tau)^{k-j}
u^2(t-\tau,y)\big)\ dyd\tau\\
\leq&\int^t_0C^{k+1}k^{k-2/3}\tau^{-1/2}C^2_0\ d\tau\\
&+\int^t_0C\tau^{-1/2}\lt[2C_0\|\p^{k}_t\big((t-\tau)^k
u(t-\tau,\cdot)\big)\|_{L^\infty}+M^{k-3/4}k^{k-2/3}\rt]\ d\tau\\
&+\sum^{k-1}_{j=1}\binom{k}{j}\int^t_0C^{j+1}j^{j-2/3}\tau^{-1/2}M^{k-j-3/4}(k-j)^{k-j-2/3}\ d\tau\\
\leq&Ct^{1/2}M^{k-2/3}k^{k-2/3}+2C_0C\int^t_0(t-\tau)^{-1/2}\|\p^{k}_\tau\big(\tau^k
u(\tau,\cdot)\big)\|_{L^\infty}\ d\tau.
\end{aligned}
\end{equation*}
Combining the estimates of $I_1$, $I_2$, and $I_3$, and setting $\Psi(t):=\|\p^{k}_t\big(t^ku(t,\cdot)\big)\|_{L^\infty}$, we get by a second iteration and using Fubini's theorem that
\begin{equation*}
\begin{aligned}
\Psi(t)\leq& CM^{k-2/3}k^{k-2/3}+C\int^t_0(t-\tau)^{-1/2}\Psi(\tau)\ d\tau\\
\leq&  CM^{k-2/3}k^{k-2/3}+C\int^t_0(t-\tau)^{-1/2}\\
&\quad\cdot \lt(M^{k-2/3}k^{k-2/3}+C\int^\tau_0(\tau-s)^{-1/2}\Psi(s)\ ds \rt)\ d\tau\\
\leq& CM^{k-2/3}k^{k-2/3}+C^2\int^t_0(t-\tau)^{-1/2}\int^\tau_0(\tau-s)^{-1/2}\Psi(s)dsd\tau\\
\leq&CM^{k-2/3}k^{k-2/3}+C^2\int^t_0\Psi(s)\underbrace{\lt(\int^t_s(t-\tau)^{-1/2}(\tau-s)^{-1/2}\ d\tau\rt)}_{\text{bounded}}\ ds\\
\leq&CM^{k-2/3}k^{k-2/3}+C^2\int^t_0\Psi(s)\ ds.
\end{aligned}
\end{equation*}
We then conclude \eqref{ee3.21} by applying the Gronwall inequality and taking a sufficiently large $M$. The proposition is proved.
\end{proof}

\begin{proof}[Proof of Theorem \ref{thns}]
Note that
\begin{equation*}
\p^k_t(t^ju)=k\p^{k-1}_t(t^{j-1}u)+t\p^k_t(t^{j-1}u).
\end{equation*}
Letting $j=k$, by Proposition \ref{p3.5} we have
\begin{equation*}
\begin{aligned}
\sup_{t\in(0,T]}\|t\p^k_t(t^{k-1} u)\|_{L^\infty}\leq& M^{k-1/2}k^{k-2/3}+M^{k-3/2}(k-1)^{k-1-2/3}k\\
  \leq& M^{k}(1+1/M)k^k.
\end{aligned}
\end{equation*}
Then by induction, we get for $j=0,1,\ldots, k$,
\begin{equation*}
\sup_{t\in(0,T]}\|t^j\p^k_t(t^{k-j} u)\|_{L^\infty}\leq M^{k}(1+1/M)^jk^k.
\end{equation*}
Taking $j=k$, we obtain
\begin{equation*}
\sup_{t\in(0,T]}\|t^k\p^k_tu(t,\cdot)\|_{L^\infty(\bR^n_+)}\leq M^{k}(1+1/M)^kk^k=(M+1)^k k^k.
\end{equation*}
This finishes the proof of Theorem \ref{thns}.
\end{proof}

\section*{Acknowledgments}  The authors thank Prof. Qi S. Zhang in UC Riverside for helpful discussions on the topic. X. Pan is supported by Natural Science Foundation of Jiangsu Province (No. SBK2018041027) and National Natural Science Foundation of China (No. 11801268).

\bibliographystyle{plain}



\def\cprime{$'$}


\end{document}